\title{Classifying Dihedral $O(2)$-Equivariant Spectra}
\author{David Barnes}
\date{April 21, 2008}
\begin{document}
\maketitle
\begin{abstract}
\noindent 
The category of rational $O(2)$-equivariant spectra
splits as a product of cyclic and dihedral parts.
Using the classification of rational $G$-equivariant spectra
for finite groups $G$, we classify the dihedral part
of rational $O(2)$-equivariant spectra in terms of an algebraic model.
\end{abstract}

\section{Main Results}
The category of rational $O(2)$-spectra, $O(2) \mcal_\qq$,
is the category of $O(2)$-equivariant EKMM $S$-modules
(\cite{mm02}) with weak equivalences 
the rational $\pi_*$-isomorphisms. 
There is a strong
symmetric monoidal Quillen equivalence
$$
\Delta : O(2) \mcal_\qq 
\overrightarrow{\longleftarrow} 
L_{EW_+} O(2) \mcal_\qq \times L_{S^{\infty \delta}} O(2) \mcal_\qq
: \prod .
$$
Where $\delta$ is the determinant 
representation of $O(2)$ on $\rr$ and $W$ is the group 
of order two.
We call $L_{EW_+} O(2) \mcal_\qq$
the category of cyclic $O(2)$-spectra and 
$\dscr \mcal_\qq:=L_{S^{\infty \delta}} O(2) \mcal_\qq$
the category of dihedral spectra.
This splitting result was proven at 
the homotopy level in \cite{gre98a} and the model 
category statement above is \cite[Theorem 6.1.3]{barnes}.

An object $V$ of the category $\mathcal{A}(\dscr)$
consists of a differential graded rational vector space $V_{\infty}$
and $dg \qq W$-modules $V_k$ for each $k \geqslant 1$,
with a map of $dg \qq W$-modules
$\sigma_V \co V_\infty \to \colim_n \prod_{k \geqslant n} V_k$.
A map $f \co V \to V'$ in this category consists of 
a map $f_\infty \co V_{\infty} \to V_{\infty}'$
in $dg \qq \leftmod$ and 
$dg \qq W$-module maps 
$f_k \co V_k \to V_k'$
such that $\sigma_{V'} \circ f_\infty = \colim_n \prod_{k \geqslant n} f_k  \circ \sigma_V$.

To define $\mathcal{A}(\dscr)$, we have simply 
taken the algebraic model for the homotopy category
of dihedral spectra in \cite{gre98a} (a graded category)
and added the requirement that each piece have a differential.
A map $f$ in $\mathcal{A}(\dscr)$ is called a weak equivalence or fibration
if $f_\infty$ and each $f_k$ is 
(in $dg \qq \leftmod$ and $dg \qq W \leftmod$
respectively). This defines a monoidal
model structure on the category $\mathcal{A} (\dscr)$.

\begin{theorem}
There is a zig-zag of monoidal Quillen equivalences
between the model category of dihedral spectra
$\dscr \mcal_\qq$ and the algebraic model for 
dihedral spectra
$\mathcal{A}(\dscr)$.
\end{theorem}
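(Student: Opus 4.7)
The plan is to split $\dscr \mcal_\qq$ further into simpler model categories indexed by the dihedral isotropy of $O(2)$, identify each piece with a familiar algebraic category via the known finite-group classification, and then reassemble the pieces into the diagram category $\mathcal{A}(\dscr)$ whose structure maps are exactly the $\sigma_V$ data.

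First I would use idempotent splittings in $\dscr \mcal_\qq$ corresponding to the family of finite dihedral subgroups $\{D_{2k}\}_{k \geqslant 1}$, together with a complementary ``infinite'' piece corresponding to $O(2)$ itself. For each $k$, the piece picking out $D_{2k}$-isotropy should be Quillen equivalent, via the classification of rational $G$-spectra for finite $G$ combined with formality of the relevant endomorphism ring spectrum, to $dg \qq W \leftmod$ with $W = N_{O(2)}(D_{2k})/D_{2k}$ of order two; this produces the factors $V_k$ of an object of $\mathcal{A}(\dscr)$. The complementary infinite piece, which is controlled by the topology of $O(2)$ after inverting the Euler class of $\delta$, should be Quillen equivalent to $dg \qq \leftmod$ and supplies $V_\infty$, and the comparison map $\sigma_V$ records how $V_\infty$ maps into the assembly $\colim_n \prod_{k \geqslant n} V_k$ obtained from comparing the finite-dihedral contributions to the infinite one.

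To turn these separate identifications into a single monoidal zig-zag onto $\mathcal{A}(\dscr)$, I would model $\dscr \mcal_\qq$ globally as modules over a small diagram of commutative DG-algebras (or a sheaf of such), whose sections assemble into precisely the data of a DG vector space $V_\infty$, DG $\qq W$-modules $V_k$, and a structure map between them. Monoidality must be tracked at every stage so that the pointwise tensor product on $\mathcal{A}(\dscr)$ matches the smash product throughout the chain of equivalences.

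The main obstacle I expect is exactly this global assembly. Treating any single $k$ reduces to established finite-group classification results, but combining the infinite family of $V_k$ together with $V_\infty$ and the map $\sigma_V$ into one diagrammatic, strong symmetric monoidal Quillen equivalence is where the bulk of the technical work will lie: in particular, choosing intermediate diagram model structures that are compatible with monoidal rectification and that handle the \emph{colim-of-products} shape of the target of $\sigma_V$ functorially, rather than only after passing to homotopy categories.
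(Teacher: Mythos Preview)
Your approach is genuinely different from the paper's, and the difference is instructive. The paper does \emph{not} attempt to split $\dscr\mcal_\qq$ into pieces indexed by dihedral subgroups and then reassemble. Instead it uses a Morita-theoretic strategy: it passes to modules over a commutative $S$-algebra $S_\dscr$, chooses a monoidal set of compact generators $\gcal_{top}$ (namely $S_\dscr$ and the objects $S_\dscr \smashprod \cofrep e_H O(2)/H_+$), forms the endomorphism ringoid $\ecal_{top}$ in symmetric spectra, and applies Schwede--Shipley tilting to get $\rightmod \ecal_{top}$. It then transports $\ecal_{top}$ to a $dg\qq$-ringoid $\ecal_t$ via Shipley's $\h\zz$-algebraicization functors, does the same Morita step on the algebraic side to write $\mathcal{A}(\dscr)\simeq \rightmod \ecal_a$, and finally compares $\ecal_t$ with $\ecal_a$ by computing that $\h_*\ecal_t$ is concentrated in degree zero and isomorphic to $\ecal_a$, so a Postnikov truncation $C_0\ecal_t$ gives the required quasi-isomorphism. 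The point is that the global assembly you worry about is handled implicitly by the choice of generators and their endomorphism ringoid; the structure map $\sigma_V$ never has to be built by hand on the topological side.

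Your route has a real obstruction at exactly the place you flag. The phrase ``idempotent splitting \dots\ together with a complementary infinite piece corresponding to $O(2)$'' is misleading: $O(2)$ is a \emph{limit point} of $\mathcal{F}O(2)/O(2)$, so there is no idempotent $e_{O(2)}$ in $e_\dscr A(O(2))\otimes\qq$ isolating it, and $\dscr\mcal_\qq$ does not decompose as a product of the $D_{2k}$-pieces and an $O(2)$-piece. The ``infinite'' part is a localization, not a summand, and the map $\sigma_V$ is genuine gluing data rather than a comparison of independently existing factors. Consequently your step ``model $\dscr\mcal_\qq$ globally as modules over a small diagram of commutative DG-algebras'' is doing all of the work and is not a consequence of the preceding splittings; you would need an independent construction of that diagram on the topological side (e.g.\ an isotropy-separation or cellularization--localization square upgraded to a monoidal Quillen statement), which is essentially a different theorem. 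The paper's Morita method avoids this entirely by never decomposing the category and instead encoding the $\colim_n\prod_{k\geqslant n}$ shape in the homotopy of the endomorphism ringoid, where it appears automatically from $[S_\dscr,S_\dscr]^{S_\dscr}_* \cong e_\dscr A(O(2))\otimes\qq$.
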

\begin{proof}
We first use Lemma \ref{lem:localtomodules} to replace
$\dscr \mcal_\qq$ by $S_\dscr \leftmod$, 
a category with every object fibrant.
Theorem \ref{thm:topmorita} moves us to a 
category of right modules over a ringoid $\ecal_{top}$ (enriched over
$Sp^\Sigma_+$).
We then alter this ringoid to obtain $\ecal_t$,
a ringoid over $dg \qq \leftmod$, see
Theorem \ref{thm:EtopisEt}. 
On the algebraic side we have $\mathcal{A}(\dscr)$
which is equivalent to $\rightmod \ecal_a$
by Theorem \ref{thm:algmorita}.
Theorem \ref{thm:ETtoEA} proves that the $dg \qq$-ringoids
$\ecal_t$ and $\ecal_a$
are quasi-ismorphic, completing the argument.
\end{proof}

This paper exists to take the homotopy level classification
of dihedral spectra in \cite{gre98a} and lift it to
the level of model categories. This strengthens the result
and allows detailed consideration of monoidal structures. 
The proof of the theorem above is an adaptation of the results of
\cite[Chapter 4]{barnes}, which itself follows the 
basic plan of \cite{greshi}. 
The new points are the comparisons of the ringoids
and the construction of a model structure on 
$\mathcal{A}(\dscr)$. This classification
is possible since the homotopy of the
endomorphism ringoid $\ecal_{top}$ is concentrated in degree zero.
Consequently, one could have followed the method of
\cite[Example 5.1.2]{ss03stabmodcat}
and replaced $\ecal_{top}$ by an Eilenberg-MacLane
ringoid $\underline{\h} \ecal_a$ and then the results of 
that paper would prove that 
$\rightmod \ecal_{top}$, $\rightmod \underline{\h} \ecal_a$
and $\rightmod \ecal_a$ are Quillen equivalent. 
This alternative method would not be compatible with the monoidal structures. 

\begin{corollary}
The above zig-zag induces a
zig-zag of Quillen equivalences between the category
of algebras in $S_\dscr \leftmod$ and the category of
algebras in $\mathcal{A}(\dscr)$.
\end{corollary}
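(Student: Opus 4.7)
The plan is to apply a standard transfer principle for monoidal Quillen equivalences, in the form of Schwede--Shipley \cite[Theorem 3.12]{ss03monoidal}, which lifts a weak monoidal Quillen equivalence between monoidal model categories to a Quillen equivalence on their categories of monoids (equivalently, algebras), provided that each side satisfies the monoid axiom and the relevant cofibrancy condition on the unit. Since the preceding theorem provides a zig-zag whose every step is already a monoidal Quillen equivalence, the corollary reduces to checking these hypotheses at each of the categories $S_\dscr \leftmod$, $\rightmod \ecal_{top}$, $\rightmod \ecal_t$, $\rightmod \ecal_a$ and $\mathcal{A}(\dscr)$ appearing in the zig-zag, and then assembling the resulting Quillen equivalences on algebras.

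For the two spectral categories $S_\dscr \leftmod$ and $\rightmod \ecal_{top}$, the monoid axiom and unit cofibrancy are standard: they follow from the analogous properties for modules over a commutative $S$-algebra and for modules over a spectrally enriched ringoid, as worked out in \cite{mm02} and \cite{ss03stabmodcat}. For the intermediate $dg\qq$-enriched module categories $\rightmod \ecal_t$ and $\rightmod \ecal_a$, we are working rationally, so every underlying chain complex is flat over $\qq$; the monoid axiom and the required behaviour on cofibrant replacements of the unit are then formal.

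The only genuinely new verification is for the algebraic model $\mathcal{A}(\dscr)$ itself. Its model structure is detected piecewise on $f_\infty$ and on each $f_k$, and its monoidal product is likewise built piecewise out of the tensor products in $dg\qq \leftmod$ and $dg\qq W \leftmod$, with the structure map $\sigma_V$ of a tensor product assembled from the structure maps of the factors via the colim--prod functor. Since $\qq$ and $\qq W$ are semisimple, the monoid axiom in each of these base categories is immediate, and a short piecewise check transports it to $\mathcal{A}(\dscr)$; the unit is cofibrant for the same reason.

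The main obstacle is therefore bookkeeping rather than conceptual: one must confirm that each left adjoint in the zig-zag is (weakly) strong monoidal with a lax monoidal right adjoint in the sense required by the Schwede--Shipley theorem, and that the unit-comparison map is a weak equivalence on cofibrant units. Once that is in place for each arrow in the zig-zag, the induced zig-zag of Quillen equivalences on algebras composes to give the claimed equivalence between algebras in $S_\dscr \leftmod$ and algebras in $\mathcal{A}(\dscr)$.
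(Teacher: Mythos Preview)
The paper provides no explicit proof for this corollary; it is stated as an immediate consequence of the monoidal nature of the zig-zag in the main theorem, together with the monoid axiom for $\mathcal{A}(\dscr)$ established later when the model structure is constructed. Your proposal correctly identifies the underlying mechanism---Schwede--Shipley's lifting of (weak) monoidal Quillen equivalences to categories of monoids, \cite[Theorem 3.12]{ss03monoidal}---and the hypotheses that need to be checked at each node. This is precisely the intended argument.

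Two small remarks. First, your list of intermediate categories is incomplete: the zig-zag also passes through $\rightmod C_0 \ecal_t$ (Theorem~\ref{thm:ETtoEA}) and through the several symmetric-spectrum module categories implicit in Shipley's comparison (Theorem~\ref{thm:EtopisEt}); the monoid axiom and unit conditions at those nodes are covered by the same references you cite, namely \cite{greshi} and \cite{shiHZ}, so nothing new is required. Second, in $S_\dscr \leftmod$ the unit is not cofibrant in the EKMM sense, so ``unit cofibrancy'' is not literally true there; you are right to phrase the requirement in terms of the unit-comparison map on a cofibrant replacement, which is what the weak form of the Schwede--Shipley theorem actually demands.
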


Let $i \co C_0 \ecal_t \to \ecal_t $ and 
$p \co C_0 \ecal_t \to \h_* \ecal_t$ be the maps constructed in 
Theorem \ref{thm:ETtoEA}. 
Recall from \cite[Theorem 4.1]{greshi} and \cite[Corollary 2.16]{shiHZ}
the composites  $\hom(\gcal_t, -) \circ D \circ \phi^*N \circ \widetilde{\qq}$
and $U \circ L'\cofrep \circ R \circ (\cofrep-) \otimes_{\ecal_t} \gcal_t$.
These are the derived composite functors of 
Theorem \ref{thm:EtopisEt}.
Cofibrant replacements ($\cofrep$) are not needed in the first of these 
composites as we are working rationally. The functor $L'$ is the 
alteration of $L$ from \cite{shiHZ} to modules over a ringoid
(\cite[Theorem 6.5]{ss03stabmodcat}).
We will then $L'$ so that it acts on categories
of algebras, we call this $L''$.

\begin{corollary}
Let $\Theta$ be the derived functor
$$(-) \otimes_{\ecal_a} \gcal_a \circ (\cofrep -) \square_{C_0 \ecal_t} \ecal_a
\circ i^* \circ \hom(\gcal_t, -) \circ
D \circ \phi^*N \circ \widetilde{\qq} \circ \hom(\gcal_{top}, -) $$ 
from $\dscr \mcal_\qq$ to $\mathcal{A} (\dscr)$.
Then for each $S_\dscr$-algebra $A$ there is a zig-zag of Quillen equivalences
between $A \leftmod$ and $\Theta A \leftmod$.
Let $\hh$ be the derived functor
$$
(\cofrep -) \smashprod_{\ecal_{top}} \gcal_{top} \circ 
U \circ L''\cofrep \circ R \circ (-) \otimes_{\ecal_t} \gcal_t \circ
(\cofrep -) \square_{C_0 \ecal_t} \ecal_t \circ p^* \circ \hom(\gcal_{a}, -) $$
from $\mathcal{A} (\dscr)$ to $\dscr \mcal_\qq$.
Then for each algebra object $B$ in $\mathcal{A}(\dscr)$ 
there is a zig-zag of Quillen equivalences
between $B \leftmod$ and $\hh B \leftmod$.
\end{corollary}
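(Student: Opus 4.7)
The plan is to lift the zig-zag of monoidal Quillen equivalences of the theorem from the ambient monoidal model categories to the module categories over a fixed algebra object on each side. The underlying principle is \cite[Theorem 3.12]{ss03monequ}: a weak monoidal Quillen equivalence between suitably nice monoidal model categories induces, for each cofibrant algebra $A$ in the source, a Quillen equivalence between $A \leftmod$ and the module category of the corresponding derived algebra in the target. The previous corollary identifies this derived algebra on each stage of the zig-zag, so composing the resulting functors on module categories will produce exactly the composite derived functors $\Theta$ and $\hh$ as written.

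First I would walk stage by stage through the same zig-zag used in the proof of the theorem: the localisation from $\dscr \mcal_\qq$ to $S_\dscr \leftmod$, the topological Morita equivalence to $\rightmod \ecal_{top}$, the passage to $\rightmod \ecal_t$, the algebraic Morita equivalence $\rightmod \ecal_a \simeq \mathcal{A}(\dscr)$, and at each stage apply the lifting theorem to obtain a Quillen equivalence of module-over-algebra categories. The various cofibrant replacement symbols $\cofrep$ appearing in the definitions of $\Theta$ and $\hh$ correspond precisely to the cofibrant replacements required to make these lifts derived-functor correct; the technical hypotheses (cofibrant generation and the monoid axiom in each category) all follow from the model-structure work already done for the theorem.

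The main obstacle is the ringoid comparison step, where instead of a single Quillen equivalence one has a span $\ecal_t \xleftarrow{i} C_0 \ecal_t \xrightarrow{p} \ecal_a$ of quasi-isomorphisms of $dg \qq$-ringoids. Here the restriction $i^*$ and the base change $(\cofrep -) \square_{C_0 \ecal_t} \ecal_a$ each induce Quillen equivalences of modules over a ringoid by \cite[Theorem 6.5]{ss03stabmodcat}, and I must upgrade each to a Quillen equivalence of modules over a chosen algebra in the relevant ringoid category. This requires verifying that the algebra-level enhancement $L''$ of the functor $L'$ from \cite{shiHZ} is a well-defined left Quillen functor on algebras and descends compatibly to modules over algebras. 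The verification proceeds along the same cellular induction as \cite[Corollary 2.16]{shiHZ}, but must be carried out in the enriched ringoid setting, and this is the only genuinely new piece of algebraic work required.
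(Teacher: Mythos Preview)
The paper states this corollary without proof; it is presented as an immediate consequence of the main theorem, the preceding corollary on algebras, and the cited machinery from \cite{ss03monequ}, \cite{greshi}, \cite{shiHZ} and \cite{ss03stabmodcat}. Your proposal is precisely the correct unpacking of that implicit argument: invoke \cite[Theorem 3.12]{ss03monequ} at each stage of the zig-zag to pass from a monoidal Quillen equivalence to a Quillen equivalence of module categories over the image algebra, with the cofibrant replacements in $\Theta$ and $\hh$ recording the derived nature of the construction.

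One small correction of placement rather than substance: you fold the discussion of $L''$ into the paragraph on the $C_0\ecal_t$ span, but $L''$ does not live there. In the paper's notation, $L$ is the right adjoint to $\phi^*N$ from \cite{shiHZ}, $L'$ is its extension to modules over a ringoid via \cite[Theorem 6.5]{ss03stabmodcat}, and $L''$ is the further lift of $L'$ to algebras. So the ``genuinely new piece of algebraic work'' you identify belongs to the $Sp^\Sigma(dg\qq\leftmod_+) \leftrightarrows Sp^\Sigma(s\qq\leftmod)$ stage of the zig-zag, not to the $i^*$/$p^*$ ringoid comparison. The ringoid comparison itself is handled by the standard extension-and-restriction-of-scalars argument for quasi-isomorphisms of $dg\qq$-ringoids, which is already monoidal and lifts to algebras and their modules without additional work. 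Apart from this organisational slip, your outline matches what the paper intends.
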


\section{The group $O(2)$}
We will use the notation $D^h_{2n}$ to represent the dihedral subgroup of order
$2n$ containing $h$, an element of $O(2) \setminus SO(2)$.
The closed subgroups of $O(2)$ are $O(2)$, $SO(2)$, the finite dihedral
groups $D^h_{2n}$ for each $h$ and the cyclic groups $C_n$ ($n \geqslant 1$).
We will always have $W=O(2)/SO(2)$.
Let $H \leqslant O(2)$, $N_{O(2)}(H)$ is the normaliser in $O(2)$ of $H$, 
it is the largest subgroup of $O(2)$ in which $H$ is normal.
The Weyl-group of $H$ in $O(2)$ is $W_{O(2)}(H):=N_{O(2)}(H)/H$.
The normaliser of $D^h_{2n}$ in $O(2)$ is $D^h_{4n}$, 
thus the Weyl group of $D_{2n}^h$ is $W$.
The cyclic groups are normal, hence the 
Weyl group of $C_n$ is $O(2)/C_n \cong O(2)$.
The Weyl group of $SO(2)$ is again $W$. 

Recall the following material from 
\cite[Chapter V, Section 2]{lms86}. 
Define $\mathcal{F} O(2)$ to be the set 
of those subgroups of $O(2)$
with finite index in their normaliser
(or equally, with finite Weyl-group),
equipped with the Hausdorff topology.
This is an $O(2)$-space via the conjugation 
action of $O(2)$ on its subgroups. 
This space is of interest due to
tom Dieck's ring isomorphism:
$$A(O(2)) \otimes \qq :=[S^0,S^0]^{O(2)} \otimes \qq \overset{\cong}{\longrightarrow} 
C(\mathcal{F} O(2)/O(2), \qq)$$
where $C(\mathcal{F} O(2)/O(2), \qq)$ is the ring of continuous maps
from $\mathcal{F} O(2)$ to $\qq$, considered as 
a discrete space. We draw $\mathcal{F} O(2)/O(2)$
below as Figure \ref{phig}.
We will sometimes write $D_{2n}$ for $(D^h_{2n})$, the conjugacy class of
$D^h_{2n}$. The point $O(2)$ is a limit point
of this space.

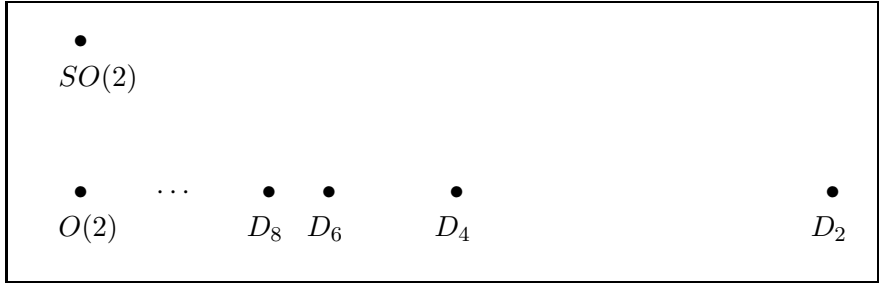
\begin{figure}[!hbt]
\begin{center}
\setlength{\unitlength}{1cm}
\framebox[0.8\textwidth]{
\begin{picture}(6,3.5)(2,1)

\put(0,2){$\bullet$}

\put(0,4){$\bullet$}

\put(10,2){$\bullet$}
\put(5,2){$\bullet$}
\put(2.5,2){$\bullet$}
\put(3.3,2){$\bullet$}
\put(1.1,2.0){$\cdots$}

\put(-0.2,1.5){$O(2)$}
\put(-0.2,3.5){$SO(2)$}

\put(9.8,1.5){$D_{2}$}
\put(4.8,1.5){$D_{4}$}
\put(3.1,1.5){$D_{6}$}
\put(2.3,1.5){$D_{8}$}

\end{picture}}
\end{center}
\caption{\label{phig} $\mathcal{F} O(2)/O(2)$.}
\end{figure}

\begin{definition}
Define $\cscr$\index{C@$\cscr$} to be the set consisting of the cyclic groups and $SO(2)$, 
this is a family of subgroups (that is, $\cscr$ is closed under conjugation and subgroups).
Let $\dscr$ be the complement of $\cscr$ in the set of all (closed) subgroups
of $O(2)$.
\end{definition}

We define idempotents of $C(\mathcal{F} O(2)/O(2), \qq)$ as follows:
$e_\cscr$ is the characteristic function of $SO(2)$, $e_\dscr =e_\cscr-1$
and $e_n$ is the characteristic function of $D_n$ for each $n \geqslant 1$,
note that $e_\dscr * e_n =e_n$.

\begin{lemma}
The rational Burnside ring of $O(2)$ is 
$\qq {e_\cscr} \oplus \qq {e_\dscr} \oplus \bigoplus_{n \geqslant 1} \qq {e_n}$, 
with multiplication defined by the multiplication of the idempotents. 
\end{lemma}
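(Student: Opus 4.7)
The plan is to identify $A(O(2)) \otimes \qq$ via tom Dieck's isomorphism with $C(\mathcal{F} O(2)/O(2), \qq)$ and then decompose the latter by hand. Under this identification, $e_\cscr$, $e_\dscr$ and each $e_n$ are, by definition, the characteristic functions of the subsets $\{SO(2)\}$, $\mathcal{F} O(2)/O(2) \setminus \{SO(2)\}$ and $\{D_{2n}\}$ respectively, which are continuous since the relevant subsets are clopen (the isolated points $SO(2)$ and each $D_{2n}$) or complements of clopen sets.

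Next I would use the topology of $\mathcal{F} O(2)/O(2)$ recorded in Figure~\ref{phig}: the points $SO(2)$ and $D_{2n}$ for $n \geqslant 1$ are all isolated, whereas $O(2)$ is the unique non-isolated point, realised as the limit of the sequence $(D_{2n})_{n \geqslant 1}$. Hence every continuous map $f \co \mathcal{F} O(2)/O(2) \to \qq$ (to discrete $\qq$) is locally constant, and in particular there exists $N$ with $f(D_{2n}) = f(O(2))$ for all $n \geqslant N$.

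The decomposition then follows: such an $f$ is uniquely determined by its values on $SO(2)$, on $O(2)$, and on each $D_{2n}$, and admits the expression
$$
f = f(SO(2)) \, e_\cscr + f(O(2)) \, e_\dscr + \sum_{n \geqslant 1} \bigl(f(D_{2n}) - f(O(2))\bigr) \, e_n,
$$
in which only finitely many terms are non-zero by the eventual-constancy condition. Conversely every finite $\qq$-linear combination of these idempotents defines a continuous function. Uniqueness of the expression is immediate by evaluating at $SO(2)$, at $O(2)$ and at each $D_{2n}$, so the stated sum is both spanning and direct.

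The ring structure is then pointwise multiplication of characteristic functions: $e_\cscr$ and $e_\dscr$ are complementary orthogonal idempotents summing to $1$, distinct $e_n$ and $e_m$ are orthogonal, and each $e_n$ is absorbed by $e_\dscr$ (so $e_\dscr * e_n = e_n$) and annihilated by $e_\cscr$. There is no serious obstacle in the argument; the only subtle point is the eventual-constancy condition at the limit point $O(2)$, which is precisely what forces the rational Burnside ring to be the direct sum $\qq e_\cscr \oplus \qq e_\dscr \oplus \bigoplus_n \qq e_n$ rather than the corresponding direct product.
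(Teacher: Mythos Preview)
Your argument is correct. The paper states this lemma without proof, treating it as an immediate consequence of tom Dieck's isomorphism together with the description of $\mathcal{F} O(2)/O(2)$ in Figure~\ref{phig}; your proposal supplies precisely the details (eventual constancy at the limit point $O(2)$, the explicit decomposition formula, and the verification of the multiplicative relations among the idempotents) that the paper leaves to the reader.
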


\section{Dihedral Spectra}
Following the work of \cite[Chapter 3]{barnes} we have 
the following constructions.
The category $O(2) \mcal_\qq$ is the category of 
$O(2)$-equivariant $S$-modules (\cite{mm02})
localised at the spectrum $S^0_\mcal \qq$. 
This spectrum is the cofibre of a map 
$g \co \bigvee_R \cofrep S \to \bigvee_F \cofrep S$
where $0 \to R \overset{f}{\to} F \to \qq$
is a free resolution of $\qq$ over $\zz$ 
and $F = \oplus_{q \in \qq} \zz$.
The map $g$ is a representative for the homotopy class
$f \otimes \id \co R \otimes \pi_*^{O(2)}(S) \to F \otimes \pi_*^{O(2)}(S)$. 
We have a map $\cofrep S \to S^0_\mcal \qq$ corresponding to
the inclusion of $\zz$ into the 1-factor of $F$. 
This map induces an isomorphism 
$\pi_*^H(\cofrep S \smashprod X) \otimes \qq \to 
\pi_*^H(S^0_\mcal \qq \smashprod X)$ for any spectrum $X$. 
The cofibrations of $O(2) \mcal_\qq$ are the same as for 
$O(2) \mcal$ and the weak equivalences $O(2) \mcal_\qq$ are
those maps $f$ such that $\pi_*^H(f) \otimes \qq$ is an isomorphism
of graded groups for all $H$.
There is a strong
symmetric monoidal Quillen equivalence
$$
\Delta : O(2) \mcal_\qq 
\overrightarrow{\longleftarrow} 
L_{EW_+} O(2) \mcal_\qq \times L_{S^{\infty \delta}} O(2) \mcal_\qq
: \prod .
$$
This result was proven at 
the homotopy level in \cite{gre98a}.
Note that $EW_+$ is rationally equivalent to 
$e_\cscr S$ and $S^{\infty \delta}$ is rationally
equivalent to $e_\dscr S$. 
We write $\dscr \mcal_\qq$ for $L_{S^{\infty \delta}} O(2) \mcal_\qq$.

We now examine the category $\dscr \mcal_\qq$ in more detail.
The weak equivalences are those maps $f$ such that 
$f \smashprod \id_{S^{\infty \delta}}$ induces an isomorphism
of rational homotopy groups, or equally, such that
$e_\dscr \pi_*^H(f) \otimes \qq$ is an isomorphism for all $H$.
The cofibrations of this model
category are the same as for $O(2) \mcal$. 
If $X$ is fibrant in this model structure, then $X$ is $S^{\infty \delta}$-local
and hence $\pi_*^H(X)=0$ for all $H \in \cscr$. Furthermore
all the homotopy groups of such an $X$ are rational vector spaces. 

\begin{lemma}\label{lem:localtomodules}
There is a commutative $S$-algebra, $S_\dscr$, such that 
$S \to S_\dscr$ is a weak equivalence in $\dscr \mcal_\qq$
and the adjunction below is a weak equivalence.
$$- \smashprod S_\dscr : \dscr \mcal_\qq \overrightarrow{\longleftarrow} 
S_\dscr \leftmod : U $$
\end{lemma}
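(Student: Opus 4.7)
The plan is to build $S_\dscr$ as a commutative $S$-algebra model for the Bousfield-local sphere in $\dscr \mcal_\qq$, and then recognise the Quillen equivalence as an instance of the standard comparison between modules over a (local) ring spectrum and the corresponding localised spectrum category. Concretely, I would first observe that $\dscr \mcal_\qq$ is the left Bousfield localisation of $O(2) \mcal_\qq$ at $S^{\infty \delta}$, and that rationally this localisation is smashing: the idempotent $e_\dscr$ of the rational Burnside ring picks out exactly the $H$-homotopy groups surviving the localisation. One then lifts this localisation to the model category of commutative $O(2)$-equivariant $S$-algebras of \cite{mm02}, producing a cofibrant-commutative replacement $S_\dscr$ of $S$ together with a map $\eta \co S \to S_\dscr$.

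For the first claim, the map $\eta$ is a $\dscr \mcal_\qq$-equivalence essentially by construction: smashing with a cofibrant model of $S^{\infty \delta} \smashprod S^0_\mcal \qq$ and inspecting $\pi_*^H$ shows that $e_\dscr \pi_*^H(\eta) \otimes \qq$ is the identity on the dihedral summand of the rational Burnside ring for every closed $H \leqslant O(2)$, while the cyclic summand is killed on both sides. For the adjunction, that $(- \smashprod S_\dscr, U)$ is a Quillen pair is immediate from the fact that the cofibrations of $\dscr \mcal_\qq$ coincide with those of $O(2) \mcal$, so $- \smashprod S_\dscr$ preserves (trivial) cofibrations, and $U$ preserves fibrations and weak equivalences because every $S_\dscr$-module is $S^{\infty \delta}$-local (it admits an action by the local sphere $S_\dscr$).

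To upgrade to a Quillen equivalence it suffices to check that the derived unit and counit are weak equivalences. The derived unit $X \to U(X \smashprod S_\dscr)$ for cofibrant $X \in \dscr \mcal_\qq$ reduces, by the smash-compatibility of the localisation together with the standard equivalence $X \smashprod S \simeq X$, to the map $\eta \smashprod \id_X$, which is a $\dscr \mcal_\qq$-equivalence by the previous paragraph and the fact that smashing with a cofibrant object preserves $\dscr \mcal_\qq$-equivalences. The derived counit $\cofrep M \smashprod_S S_\dscr \to M$ on an $S_\dscr$-module $M$ is then handled by a standard cell-induction argument: it is obviously an equivalence on free modules $S_\dscr \smashprod A$, and the general case follows by passage to cofibre sequences and filtered colimits within the cellular model structure on $S_\dscr \leftmod$ provided by \cite{mm02}.

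The main obstacle is producing $S_\dscr$ as a genuinely \emph{commutative} (rather than merely associative, or $E_\infty$) $S$-algebra fibrant replacement of $S$ in $\dscr \mcal_\qq$; without this the statement fails, since the symmetric monoidal structure on $S_\dscr \leftmod$ would not be available and the corollary on algebras would break. This is where the smashing character of the rational localisation (so that the localisation is exact and commutes with the symmetric algebra construction) together with the positive stable model structure on commutative $O(2)$-equivariant $S$-algebras of \cite{mm02} is essential; the detailed construction follows the template of \cite[Chapter 3]{barnes}.
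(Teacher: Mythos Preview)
Your proposal is correct and follows essentially the same template as the paper, but with two points worth flagging. First, the paper obtains $S_\dscr$ directly from \cite[Chapter VIII, Theorem 2.2]{ekmm97}, which produces a commutative $S$-algebra localisation at any cell-$S$-module (here $S^0_\mcal \qq \smashprod S^{\infty\delta}$); you instead invoke the positive model structure on commutative algebras from \cite{mm02}, which is the wrong reference in this EKMM setting and is not how the construction is actually carried out. Second, the paper inserts an explicit intermediate claim---that $S_\dscr$ is $\pi_*$-isomorphic to $S^0_\mcal \qq \smashprod S^{\infty\delta}$---proved via the idempotent splitting $e_\cscr \oplus e_\dscr$, and deduces from this that every $S_\dscr$-module is local; your line ``it admits an action by the local sphere'' presupposes exactly this smashing identification, so you are using the claim without stating it. Finally, your cell-induction for the derived counit is unnecessary: since $U$ preserves and \emph{detects} all weak equivalences, it suffices (as the paper does) to check only that the derived unit $X \to X \smashprod S_\dscr$ is a $\dscr\mcal_\qq$-equivalence for cofibrant $X$, which follows by smashing $S \to S_\dscr$ with $X$.
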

\begin{proof}
The existence of $S_\dscr$ comes from the fact that one can localise
$S$ at a cell-$S$-module (in this case $S^0_\mcal \qq \smashprod S^{\infty \delta}$)
and obtain a commutative $S$-algebra, $S_\dscr$, such that the unit map is a 
$S^0_\mcal \qq \smashprod {S^{\infty \delta}}$-equivalence
(see \cite[Chapter VIII, Theorem 2.2]{ekmm97}). 

We claim that $S^0_\mcal \qq \smashprod {S^{\infty \delta}}$
and $S_\dscr$ are $\pi_*$-isomorphic. 
The construction of $S_\dscr$ comes with a 
$\pi_*$-isomorphism 
$S^0_\mcal \qq \smashprod {S^{\infty \delta}} \to 
S^0_\mcal \qq \smashprod {S^{\infty \delta}} \smashprod S_\dscr$.
This last term is $\pi_*$-isomorphic to 
$S^0_\mcal \qq \smashprod e_\dscr S_\dscr$.
The spectra
$S^0_\mcal \qq \smashprod S_\dscr$
and 
$(S^0_\mcal \qq \smashprod e_\cscr S_\dscr) \vee
(S^0_\mcal \qq \smashprod e_\dscr S_\dscr)$
are $\pi_*$-isomorphic.
Since $S_\dscr$ is $e_\dscr S$-local, 
it follows that $S^0_\mcal \qq \smashprod e_\cscr S_\dscr$ 
is $\pi_*$-isomorphic to a point.
Since $S_\dscr$ is $S^0_\mcal \qq$-local,
$S_\dscr$ and $S^0_\mcal \qq \smashprod S_\dscr$
are $\pi_*$-isomorphic and we have proven 
our claim.
It then follows by a standard argument that 
all $S_\dscr$-modules are 
$S^0_\mcal \qq \smashprod {S^{\infty \delta}}$-local. 

Now we can prove that the above adjunction is a 
strong symmetric monoidal Quillen
equivalence. Note that 
the weak equivalences and fibrations of $S_\dscr \leftmod $
are defined in terms of their underlying maps in $O(2) \mcal$. 
The left adjoint preserves cofibrations and it takes
acyclic cofibrations to 
$S^0_\mcal \qq \smashprod {S^{\infty \delta}}$-equivalences
between $S_\dscr$-modules. Since such a module is 
$S^0_\mcal \qq \smashprod {S^{\infty \delta}}$-local, 
it follows that the left adjoint takes 
acyclic cofibrations to $\pi_*$-isomorphisms. 

The right adjoint preserves and detects all weak-equivalences, 
so we must prove that for $X$, cofibrant in $\dscr \mcal_\qq$,
the map $S \to X \smashprod S_\dscr$ is a weak-equivalence in 
$\dscr \mcal_\qq$. This is immediate, since smashing with a 
cofibrant object will preserve the 
$S^0_\mcal \qq \smashprod {S^{\infty \delta}}$-equivalence 
$S \to S_\dscr$. 
\end{proof}

Note that an $S_\dscr$-module $X$ has rational homotopy groups
and $\pi_*^{H}(X)=0$ for any $H \in \cscr$. 

\begin{lemma}
The model category $S_\dscr \leftmod $
is generated by   
$S_\dscr$ and the countably infinite  collection 
$\{ S_\dscr \smashprod \cofrep e_H O(2)/H_+  \ | \ H \in \dscr \setminus \{O(2) \} \}$.
Furthermore, these objects are compact. 
\end{lemma}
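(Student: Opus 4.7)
The plan is to transport a known generating set of $O(2)\mcal$ through the adjunction of Lemma~\ref{lem:localtomodules} and then prune it using the vanishing of cyclic homotopy. The category $O(2)\mcal$ is generated by the compact objects $\cofrep O(2)/H_+$ as $H$ runs over closed subgroups of $O(2)$; after rationalisation and localisation at $S^{\infty\delta}$, a map in $\dscr\mcal_\qq$ is a weak equivalence precisely when $e_\dscr \pi_*^H$ of it is an isomorphism for every closed $H$. Since $e_\dscr \cdot e_\cscr = 0$ in the rational Burnside ring, the generators indexed by $H \in \cscr$ are redundant. For each finite $H \in \dscr$ I would then replace $\cofrep O(2)/H_+$ by $\cofrep e_H O(2)/H_+$, using the decomposition $A(O(2)) \otimes \qq = \qq e_\cscr \oplus \qq e_\dscr \oplus \bigoplus_{n \geqslant 1} \qq e_n$ to split off precisely the piece visible through the $H$-fixed points. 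Summed over the countable set of conjugacy classes $(D_{2n})$ and supplemented by the sphere itself for the limit point $H=O(2)$, this gives a countable set of compact generators for $\dscr\mcal_\qq$.

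Applying $- \smashprod S_\dscr$, which is the left half of a Quillen equivalence and therefore sends compact generators to compact generators, I obtain exactly $S_\dscr$ together with $\{S_\dscr \smashprod \cofrep e_H O(2)/H_+ \mid H \in \dscr \setminus \{O(2)\}\}$. No further idempotent is needed for the $H=O(2)$ term, because $S_\dscr$ is already $e_\dscr$-local and $e_\dscr$ acts as the identity on every $S_\dscr$-module. Compactness of each object in the list follows by a retract argument: each orbit $O(2)/H_+$ admits a finite $O(2)$-CW structure, so $\cofrep O(2)/H_+$ is compact in $O(2)\mcal$; the idempotent $e_H$ presents $\cofrep e_H O(2)/H_+$ as a retract of a compact object, hence itself compact; and the adjunction isomorphism $[S_\dscr \smashprod Y, X]^{S_\dscr}_* \cong [Y, UX]^{O(2)}_*$ together with the fact that $U$ preserves filtered homotopy colimits of fibrant $S_\dscr$-modules carries compactness across to $S_\dscr\leftmod$.

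The main obstacle is verifying the detection property cleanly: namely, that for a fibrant $X \in S_\dscr\leftmod$, vanishing of $[S_\dscr, X]^{S_\dscr}_*$ and of $[S_\dscr \smashprod \cofrep e_H O(2)/H_+, X]^{S_\dscr}_*$ for every finite $H \in \dscr$ forces $X \simeq \ast$. This reduces, via the adjunction and a Wirthm\"uller-type identification $[\cofrep e_H O(2)/H_+, UX]^{O(2)}_* \cong e_H \pi_*^H(UX)$, to showing that the idempotents $\{e_n\}_{n \geqslant 1}$ together with the contribution at the limit point exhaust the support of $\pi_*^H(UX)$ for every dihedral $H$. Since $UX$ is $e_\dscr$-local and $\pi_*^H(UX) = 0$ for $H \in \cscr$, the Burnside ring decomposition above makes this exhaustion clear, with the only subtlety being the handling of the limit behaviour at $O(2)$, which is precisely what forces $S_\dscr$ itself to appear in the generating set.
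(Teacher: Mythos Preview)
Your overall strategy matches the paper's: reduce to a detection statement for $S_\dscr$-modules and verify compactness via the isomorphism $[\sigma_H,X]^{S_\dscr}_* \cong e_H\pi_*^H(X)$. The framing through ``transport generators along a Quillen equivalence'' is harmless but does not save work, since after pruning and replacing $O(2)/H_+$ by $e_H O(2)/H_+$ you must still check detection directly.

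There is one genuine gap in the detection argument. From the hypotheses you obtain $e_K\pi_*^K(X)=0$ for each finite dihedral $K$ and $\pi_*^{O(2)}(X)=0$. Your appeal to ``the Burnside ring decomposition'' then gives, for a fixed finite dihedral $H$ of order $2m$, the splitting $\pi_*^H(X)=\bigoplus_{n\mid m} e_n\,\pi_*^H(X)$ (the cyclic pieces vanish because $X$ is $e_\dscr$-local). But this is a decomposition by the idempotents acting on the \emph{same} group $\pi_*^H$; you still need to explain why $e_n\,\pi_*^H(X)$ vanishes when all you know is $e_n\,\pi_*^{D_{2n}}(X)=0$. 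The Burnside-ring splitting of $A(O(2))\otimes\qq$ alone does not give this: it tells you how idempotents restrict, not how the $e_n$-piece of $\pi_*^H$ compares to that of $\pi_*^K$ for $K\leqslant H$.

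The paper closes this gap by invoking the rational splitting for finite groups from \cite[Example~C(i)]{gre98a},
\[
\pi_*^H(X)\;\cong\;\bigoplus_{(K)\leqslant H}\bigl(e_K\,\pi_*^K(X)\bigr)^{W_HK},
\]
which expresses each summand in terms of $\pi_*^K$ rather than $\pi_*^H$ and so makes the vanishing immediate. Equivalently one could argue via geometric fixed points and the tom~Dieck splitting. Either way, some such input is required; once you insert it, your argument coincides with the paper's.
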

\begin{proof}
We must prove that if $X$ is an object of 
$S_\dscr \leftmod $ such that 
$[\sigma ,X]_*^{S_\dscr}=0$ (maps in the homotopy 
category of $S_\dscr \leftmod $)
for all $\sigma$ as above, 
then $X \to *$ is a $\pi_*$-isomorphism. 
As mentioned above $\pi_*^{H}(X)=0$ for any $H \in \cscr$ and 
since $[S_\dscr ,X]_*^{S_\dscr}=0$ we see that 
$\pi_*^{O(2)}(X)=0$.
So now we must consider a finite dihedral group $H$:
by \cite[Example C(i)]{gre98a}, $\pi_*^H(X)$ 
is given by 
$\oplus_{(K) \leqslant H} (e_K \pi_*^K(X))^{W_H K}$.
We have assumed that 
$[S_\dscr \smashprod \cofrep e_K O(2)/K_+ ,X]_*^{S_\dscr}=0$, 
for each finite dihedral $K$, 
but this is precisely the condition that 
$e_K \pi_*^K(X) = 0$ for each $K$. 
Thus $\pi_*^H(X)=0$ and our set generates the homotopy category. 
Compactness follows from the isomorphisms
$[\sigma_H, X]^{S_\dscr}_* \cong e_H \pi_*^H(X)$. 
\end{proof}

\begin{proposition}
The category of dihedral spectra is a spectral model category.
\end{proposition}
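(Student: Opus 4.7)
The plan is to exhibit $\dscr \mcal_\qq$ as a left Bousfield localization of $O(2)\mcal$ and verify that its spectral structure descends. Recall that $O(2)\mcal$ is a spectral model category: it is tensored, cotensored and enriched over symmetric spectra via the strong symmetric monoidal left Quillen functor $\iota \co Sp^\Sigma \to O(2)\mcal$ that assigns trivial $O(2)$-action, and the pushout-product axiom for this external action is inherited from the internal monoidal pushout-product axiom in $O(2)\mcal$ proved in \cite{mm02}. What I must show is that the pushout-product axiom with respect to $\iota$ continues to hold when the model structure is localized first at $S^0_\mcal \qq$ and then at $S^{\infty \delta}$.

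Since $\dscr \mcal_\qq$ shares its cofibrations with $O(2)\mcal$, the ``both are cofibrations'' clause of SM7 is automatic, and the clause ``$j$ is an acyclic cofibration in $Sp^\Sigma$'' transfers for free as well: $\iota(j)$ is then a $\pi_*$-isomorphism in $O(2)\mcal$, the pushout-product $\iota(j) \square i$ is a $\pi_*$-isomorphism by the spectral structure on $O(2)\mcal$, and every $\pi_*$-iso is a $\dscr\mcal_\qq$-weak equivalence. So the real content is the remaining case: if $i \co X \to Y$ is an acyclic cofibration in $\dscr\mcal_\qq$ and $j \co A \to B$ is a cofibration in $Sp^\Sigma$, then $i \square \iota(j)$ is a $\dscr\mcal_\qq$-weak equivalence. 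Unwinding the localization, this amounts to checking that smashing with $\iota(A)$, for any cofibrant symmetric spectrum $A$, preserves $S^0_\mcal \qq \smashprod S^{\infty \delta}$-equivalences between cofibrant $S$-modules. I would establish this by showing that $\iota$ sends cofibrant symmetric spectra to cofibrant (hence flat, in the EKMM sense) $S$-modules, and then invoking the usual permanence of $E$-equivalences under smash with flat objects in $O(2)\mcal$.

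The main obstacle is this last flatness step: one must confirm that $\iota(A)$ is honestly cofibrant as an $O(2)$-equivariant $S$-module whenever $A$ is a cofibrant symmetric spectrum, so that smashing with it is homotopically well-behaved both at the level of $\pi_*$-isomorphisms and at the level of the Bousfield test spectrum. This reduces, by a standard cell-induction, to the case of the generating cofibrations of $Sp^\Sigma$, where it follows from the construction of $\iota$ together with the analysis of cofibrations in \cite[Chapter III]{ekmm97}. Once this is in place, the pushout-product axiom for the spectral enrichment on $\dscr\mcal_\qq$ is formal, and the proposition follows.
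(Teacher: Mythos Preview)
Your argument is correct in outline but takes a different route from the paper. You show that the spectral structure on $O(2)\mcal$ survives Bousfield localization to $\dscr\mcal_\qq$ by checking SM7 directly, reducing the acyclicity of $i\,\square\,\iota(j)$ to the flatness of $\iota(A)$ for cofibrant $A$ (one small omission: that reduction also needs the standard cube/left-properness step to pass from ``$\iota(A)\smashprod-$ preserves $E$-equivalences'' to the pushout-product being acyclic). The paper instead passes to the Quillen-equivalent category $S_\dscr\leftmod$ and manufactures the enrichment from an explicit chain of strong symmetric monoidal Quillen pairs
\[
Sp^\Sigma_+ \longrightarrow \iscr\sscr_+ \longrightarrow O(2)\iscr\sscr_+ \longrightarrow O(2)\mcal \longrightarrow S_\dscr\leftmod
\]
through equivariant orthogonal spectra, taking the function spectrum to be the composite right adjoint applied to $F_{S_\dscr}(X,Y)$. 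The paper's route has two advantages: it makes your functor $\iota$ completely explicit (the phrase ``assigns trivial $O(2)$-action'' hides exactly the symmetric-spectra-to-EKMM comparison that this chain supplies), and it lands in $S_\dscr\leftmod$, the fibrant-everywhere category actually fed into the Morita step of Theorem~\ref{thm:topmorita}. Your localization argument, on the other hand, is more conceptual and more portable: it is really the general statement that a monoidal Bousfield localization of a spectral model category remains spectral whenever tensoring with cofibrant objects preserves local equivalences.
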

\begin{proof}
This takes a little work and we must use the 
category of ${O(2)}$-equivariant orthogonal spectra.
We begin with the the composite functor
$i_* \varepsilon^*_{O(2)} \co \iscr \sscr_+ \to {O(2)} \iscr \sscr_+$
as defined in \cite[Chapter V, Proposition 3.4]{mm02}, which
states that this functor is part of a Quillen pair $(i_* \varepsilon^*_{O(2)}, (i^* (-))^{O(2)})$.
This is a strong symmetric monoidal adjunction, as
noted on \cite[Chapter V, Page 80]{mm02}.
Now we use the following diagram:
$$ \xymatrix@C+0.6cm{
Sp^\Sigma_+
\ar@<+0.4ex>[r]^(0.6){\mathbb{P} \circ |-|} &
\iscr \sscr_+
\ar@<+0.4ex>[r]^(0.5){i_* \varepsilon^{*}_{O(2)}}
\ar@<+0.4ex>[l]^(0.4){\sing \circ \mathbb{U}} &
O(2) \iscr \sscr_+
\ar@<+0.4ex>[r]^(0.6){\nn }
\ar@<+0.4ex>[l]^(0.5){(i^* (-))^{O(2)} } &
O(2) \mcal
\ar@<+0.4ex>[r]^(0.5){S_\dscr \smashprod (-)}
\ar@<+0.4ex>[l]^(0.4){\nn^\# } &
{S_\dscr} \leftmod
\ar@<+0.4ex>[l]^(0.5){U}
 } $$
For a pair of $S_\dscr$-modules $X$ and $Y$, the
$Sp^\Sigma_+$-function object
$\underhom (X,Y) \in Sp^\Sigma_+$
is given by $\sing \mathbb{U} (i^* \nn^\# U F_{S_\dscr}(X,Y))^{O(2)}$.
\end{proof}

So now we have a spectral model category and a set of 
compact, fibrant generators such that every non-unit object is fibrant. 
Take the closure of this set under the smash product, call it 
$\gcal_{top}$. Take the subcategory of $S_\dscr \leftmod$
on object set $\gcal_{top}$ considered as a category enriched
over symmetric spectra, we denote this spectral category by $\ecal_{top}$.
We will sometimes call an enriched category a ringoid. 
A right module over $\ecal_{top}$
is a contravariant enriched functor $M \co \ecal_{top} \to Sp^\Sigma_+$,
the category of such functors is denoted $\rightmod \ecal_{top}$.
Later we will use other categories in place of 
$Sp^\Sigma_+$, such as $dg \qq$-modules.
The category of right modules over $\ecal_{top}$ has a model 
structure with weak equivalences and fibrations 
defined objectwise in $Sp^\Sigma_+$,
see \cite[Subsection 3.3]{ss03stabmodcat} for more details.


\begin{theorem}\label{thm:topmorita}
The adjunction below is a Quillen equivalence with
strong symmetric monoidal left adjoint and
lax symmetric monoidal right adjoint.
$$(-) \otimes_{\ecal_{top} } \gcal_{top} :
\rightmod \ecal_{top}
\overrightarrow{\longleftarrow}
S_\dscr \leftmod :
\underhom(\gcal_{top}, -)$$
\end{theorem}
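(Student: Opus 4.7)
The plan is to apply the Schwede--Shipley framework of \cite[Theorem 3.9.3]{ss03stabmodcat}, following \cite[Chapter 4]{barnes}, to $S_\dscr \leftmod$ with its spectral enrichment and the set of compact, fibrant generators $\gcal_{top}$. Let $\underhom(\gcal_{top}, Y)$ denote the $Sp^\Sigma_+$-enriched functor $\sigma \mapsto \underhom(\sigma, Y)$ and let $(-) \otimes_{\ecal_{top}} \gcal_{top}$ be the enriched coend. To see this is a Quillen adjunction, recall that fibrations and weak equivalences in $\rightmod \ecal_{top}$ are defined objectwise, so it suffices to check that $\underhom(\sigma,-)$ preserves (acyclic) fibrations for each $\sigma \in \gcal_{top}$. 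This is the pushout-product axiom for the spectral model structure on $S_\dscr \leftmod$, applicable because each $\sigma$ is cofibrant; the fact that each $\sigma$ is also fibrant means no cofibrant or fibrant replacement is needed in comparing the point-set and derived functors on generators.

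To upgrade this to a Quillen equivalence, I would show that the derived unit
$$M \longrightarrow \underhom(\gcal_{top}, M \otimes_{\ecal_{top}} \gcal_{top})$$
is a weak equivalence for every cofibrant $M \in \rightmod \ecal_{top}$. On a representable module $\ecal_{top}(-, \sigma)$ the unit is an isomorphism by the enriched Yoneda lemma, and one extends by cell induction along the generating cofibrations of $\rightmod \ecal_{top}$. Compactness of the objects of $\gcal_{top}$ in the homotopy category of $S_\dscr \leftmod$ is precisely what allows $\underhom(\sigma, -)$ to commute with the homotopy colimits (coproducts, sequential colimits and cofibre sequences) arising in the cell filtration, while stability of the model category lets one freely pass between such colimits and fibre sequences. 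Together with the previous lemma (that $\gcal_{top}$ generates the homotopy category, so the right adjoint detects equivalences between fibrant objects), this gives the equivalence. I expect the cell-induction step --- balancing compactness, stability and the spectral SM7 axiom simultaneously --- to be the main obstacle.

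For the monoidal statements, recall that $\gcal_{top}$ was constructed to be closed under the smash product, so $\ecal_{top}$ is a symmetric monoidal spectral ringoid and $\rightmod \ecal_{top}$ inherits a closed symmetric monoidal structure by Day convolution. On representables, $(-) \otimes_{\ecal_{top}} \gcal_{top}$ sends the Day-convolved tensor to the smash product of the corresponding $S_\dscr$-modules essentially by construction; as both sides preserve colimits in each variable, this identification extends to all modules, showing that the left adjoint is strong symmetric monoidal. The right adjoint $\underhom(\gcal_{top}, -)$ is then automatically lax symmetric monoidal, as the right adjoint of a strong symmetric monoidal functor between closed symmetric monoidal categories.
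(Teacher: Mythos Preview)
Your proposal is correct and follows essentially the same approach as the paper: both invoke the Schwede--Shipley Morita theory \cite[Theorem~3.9.3]{ss03stabmodcat}, with the monoidal enhancement coming from \cite[Theorem~4.1]{greshi} (your Day-convolution argument is exactly what underlies that result). The paper's proof simply cites these theorems together with \cite[Theorem~9.1.2]{barnes}, whereas you have unpacked the cell-induction and compactness mechanics that go into them.
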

\begin{proof}
This follows from \cite[Theorem 3.9.3]{ss03stabmodcat}
and \cite[Theorem 4.1]{greshi}, see also
\cite[Theorem 9.1.2]{barnes}.
\end{proof}

\begin{theorem}\label{thm:EtopisEt}
There is a zig-zag of monoidal Quillen equivalences between
$\rightmod \ecal_{top}$ (enriched over
$Sp^\Sigma_+$) and a category
$\rightmod \ecal_{t}$
(enriched over $dg \qq \leftmod$).
This zig-zag induces
an isomorphism of monoidal graded $\qq$-categories:
$\pi_*(\ecal_{top}) \cong \h_* \ecal_t$.
\end{theorem}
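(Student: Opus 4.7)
The strategy is to apply the chain of monoidal Quillen equivalences due to Shipley \cite{shiHZ} that connects $H\qq$-module symmetric spectra to unbounded $\qq$-chain complexes, lifting it from commutative ring spectra to spectrally enriched ringoids by means of \cite[Theorem 6.5]{ss03stabmodcat}. Concretely, the plan is to produce a zig-zag of the form
$$\ecal_{top} \longleftarrow \ecal_{top}^c \wedge H\qq \longrightarrow \phi^*N\widetilde{\qq}(\ecal_{top}^c) \longleftarrow \cdots \longrightarrow \ecal_t,$$
where each arrow is a morphism of ringoids induced by the corresponding step of Shipley's chain (smash with $H\qq$, the Dold-Kan equivalence $N$ between simplicial $\qq$-modules and connective chain complexes, $\phi^*$ for the change between symmetric spectra in chain complexes and unbounded complexes, and Shipley's normalization $L$), applied object-wise to the morphism spectra of a suitable cofibrant replacement $\ecal_{top}^c$ of $\ecal_{top}$ in the category of $Sp^\Sigma_+$-ringoids.

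Having built this zig-zag, the Quillen equivalence on right module categories follows by invoking \cite[Theorem 6.5]{ss03stabmodcat} at each step: each individual functor is a weak monoidal Quillen equivalence (some strong monoidal, some only lax), and each thus extends to a Quillen equivalence between right module categories over the respective ringoids, with the left adjoints being strong symmetric monoidal in the appropriate sense on cofibrant objects. Because we are working rationally, one can dispense with several cofibrant replacements that would otherwise be necessary, as already noted after the corollaries above. The definition of $\ecal_t$ is then simply the image of $\ecal_{top}^c$ under the (derived) composite functor $\phi^*N\widetilde{\qq}$ followed by Shipley's $L$, adapted to ringoids following \cite[Theorem 6.5]{ss03stabmodcat}.

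For the monoidal graded isomorphism $\pi_*(\ecal_{top}) \cong \h_* \ecal_t$, each intermediate functor induces, on morphism objects, a natural isomorphism on homotopy/homology by the corresponding Quillen equivalence statement in Shipley's paper, and these isomorphisms respect composition (which is the ringoid analogue of multiplication) since each functor is lax monoidal and the lax structure map is a quasi-isomorphism between cofibrant objects. Composing the resulting isomorphisms of graded $\qq$-categories yields the stated identification, which is automatically monoidal because at the level of homotopy/homology all the weak monoidal structures become strictly monoidal.

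The main obstacle is keeping the monoidal structure alive across the step involving $L$, which is only lax symmetric monoidal; the comparison maps $L(X) \otimes L(Y) \to L(X \wedge Y)$ are quasi-isomorphisms only on cofibrant input. For ringoids this means one cannot define $\ecal_t$ by naively applying $L$ to the hom-objects of $\ecal_{top}$, but must first replace $\ecal_{top}$ by a ringoid cofibrant in the Schwede-Shipley sense (cofibrant on each hom-object, with cofibrant unit) and then apply $L$ hom-by-hom, using the lax structure maps to define composition in $\ecal_t$. Verifying that this procedure produces a bona fide $dg\qq$-ringoid whose module category is Quillen equivalent to $\rightmod \ecal_{top}$, and that the composite zig-zag is monoidal, is the technical heart of the argument and will follow the template of \cite[Chapter 4]{barnes} adapted to the dihedral setting.
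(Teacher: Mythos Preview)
Your overall strategy matches the paper's: both invoke \cite[Theorem 4.1]{greshi} (built on \cite[Corollary 2.16]{shiHZ}) and lift Shipley's zig-zag to ringoids via \cite[Theorem 6.5]{ss03stabmodcat}. Two specifics, however, diverge from the paper's construction.

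First, in the paper's notation the functor landing in $dg\qq\leftmod$ is $D$, not $L$; here $L$ denotes the \emph{right} adjoint to $\phi^*N$ and points back towards symmetric spectra in simplicial $\qq$-modules. Crucially, the paper notes that $D$ \emph{is} symmetric monoidal (citing Strickland's resolution \cite{Dsymmon}), so the lax-monoidality concern you devote your final paragraph to does not arise at that stage; the genuinely lax step in Shipley's chain is $\phi^*N$. There is also no separate ``smash with $H\qq$'' stage in the paper: $\widetilde{\qq}$ already lands in $Sp^\Sigma(\sqq\leftmod)$.

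Second, and more substantively, the paper does not define $\ecal_t$ as the image of a cofibrantly replaced $\ecal_{top}$ under the composite $D\phi^*N\widetilde{\qq}$. Instead, after reaching $\rightmod D\phi^*N\widetilde{\qq}\ecal_{top}$ it performs a further Morita step (analogous to Theorem~\ref{thm:topmorita}): the generators are pushed through $\hom(\gcal_{top},-)$ and then $D\phi^*N\widetilde{\qq}$, cofibrantly replaced, and closed under products to form a set $\gcal_t$; the ringoid $\ecal_t$ is the full $dg\qq$-subcategory on $\gcal_t$. This endomorphism-ringoid construction is precisely the device that makes $\ecal_t$ a monoidal $dg\qq$-category without requiring a ringoid-level cofibrant replacement of $\ecal_{top}$. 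Your alternative route via such a replacement may be workable, but it is not the argument the paper gives and would need its own justification.
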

\begin{proof}
This is contained in the proof of \cite[Theorem 4.1]{greshi}
which is based on \cite[Corollary 2.16]{shiHZ}.
Further details can be found in 
\cite[Section 9.3]{barnes}, 
noting that $\ecal_t$ is, in fact, a monoidal $dg \qq$-category. 
We describe the adjoint pairs below.
\end{proof}

The zig-zag consists of the following functors 
applied to modules over ringoids.
The free simplicial $\qq$-module functor induces an 
adjunction:
$\widetilde{\qq} : Sp^\Sigma \overrightarrow{\longleftarrow}
Sp^\Sigma(\sqq \leftmod) : U$
Normalisation of simplicial $\qq$-modules
gives an adjunction:
$$\phi^* N : Sp^\Sigma(\sqq \leftmod) \overrightarrow{\longleftarrow}
Sp^\Sigma(dg \qq \leftmod_+) : L.$$
Finally there is the adjoint pair
$D : Sp^\Sigma(dg \qq \leftmod_+)
\overrightarrow{\longleftarrow}
dg \qq \leftmod : R$,
where the functor $R$
takes a chain complex $Y$
to the symmetric spectrum with
$RY_n = C_0 (Y \otimes \qq[m])$.

These adjoint pairs induce a zig-zag of Quillen equivalences
between $\rightmod \ecal_{top}$
and $\rightmod D \phi^* N \widetilde{\qq} \ecal_{top}$.
One also needs to apply an adjoint pair
similar to that of Theorem \ref{thm:topmorita}.
Consider the set 
$\{ S_\dscr \smashprod \cofrep e_H O(2)/H_+  \ | \ H \in \dscr \setminus \{O(2) \} \}$.
Apply $\hom( \gcal_{top},-)$ and the above functors
to this set to obtain a collection of objects in 
$\rightmod D \phi^* N \widetilde{\qq} \ecal_{top}^H$.
Take cofibrant replacements and consider all such 
products of these objects. This set is $\gcal_t$
and gives the $dg \qq$-ringoid $\ecal_t$.
We then have a Quillen equivalence between 
$\rightmod D \phi^* N \widetilde{\qq} \ecal_{top}^H$
and $\rightmod \ecal_t$.
Note that the functor $D$, from \cite{shiHZ}, is symmetric monoidal. 
There has been some confusion over this issue, which has now been resolved by 
a detailed note by Neil Strickland: \cite{Dsymmon}.
Thus we have an algebraic model for dihedral spectra,
albeit one which is not particularly explicit. 
We spend the rest of this paper constructing a better
algebraic model and proving that is is monoidally Quillen
equivalent to $\rightmod \ecal_t$.

\section{The Algebraic Model}
We take the work of \cite{gre98a} and consider the algebraic model for the
homotopy category of dihedral spectra. We give this category 
a monoidal model structure and then replace it 
by modules over a $dg \qq$-ringoid.

\begin{definition}
An object $V$ of the algebraic model $\mathcal{A}(\dscr)$
consists of a differential graded rational vector space $V_{\infty}$
and $dg \qq W$-modules $V_k$ for each $k \geqslant 1$,
with a map of $dg \qq W$-modules
$\sigma_V \co V_\infty \to \colim_n \prod_{k \geqslant n} V_k$.
We will always consider a $dg \qq$-module
as a $dg \qq W$-module with trivial $W$-action. 

A map $f \co V \to V'$ in this category consists of 
a map $f_\infty \co V_{\infty} \to V_{\infty}'$
in $dg \qq \leftmod$ and 
$dg \qq W$-module maps 
$f_k \co V_k \to V_k'$
making the square below commute. 
We often write $V(\text{end})$ for 
$\colim_n \prod_{k \geqslant n} V_k$.
$$\xymatrix@C+0.5cm{
V_{\infty} \ar[r]^(0.4){\sigma_V} \ar[d]_{f_\infty} &
\colim_n \prod_{k \geqslant n} V_k 
\ar[d]^{\colim_n \prod_{k \geqslant n} f_k} \\
V_{\infty}' \ar[r]^(0.4){\sigma_{V'}} &
\colim_n \prod_{k \geqslant n} V_k' 
}$$
\end{definition}

The following definition and theorem 
are taken from \cite{gre98a}.
Note that for any compact Lie group $G$ and closed subgroup $H$, 
the action of $N_G H/H$ on $G/H$
induces an action of $N_G H/H$ on 
$[G/H_+,X]^G_* \cong \pi_*^H(X)$. 
\begin{definition}\label{def:dihedralmackey}
For an $O(2)$-spectrum $X$ with rational homotopy groups, let 
$\underline{\pi}_*(X)$ denote the following object of 
$\mathcal{A}(\dscr)$ with trivial differential. 
Let $k \geqslant 1$ and take $H$ a dihedral group with 
$|H| =2k$, then
$\underline{\pi}_*(X)_k = e_H \pi_*^H(X) \otimes \qq$. 
Let $f_n = e_\dscr - \Sigma_{i=1}^n e_i$ and set
$\underline{\pi}_*(X)_\infty = \colim_n ( f_n \pi_*^{O(2)}(X) \otimes \qq)$. 
The structure map is induced by 
the collection of maps
$$f_n \pi_*^{O(2)}(X) \otimes \qq \longrightarrow (\iota_H)_*(f_n) \pi_*^{H}(X) \otimes \qq
\to e_H (\iota_H)_* (f_n) \pi_*^{H}(X) \otimes \qq$$
where the first arrow arises from the forgetful functor $\iota^*_H$
($\iota_H$ is the inclusion $H \to O(2)$) and the second is applying $e_H$.
\end{definition}

This construction defines a functor 
$\underline{\pi}_* \co \ho S_\dscr \leftmod \to \mathcal{A}(\dscr)$,
using the forgetful functor 
$\ho S_\dscr \leftmod \to \ho O(2) \mcal$.
Thus one has a map of $\qq$-modules
$e_\dscr[X,Y]^{O(2)} \to 
\hom_{\mathcal{A}(\dscr)} ( \underline{\pi}_*( X),
\underline{\pi}_*(Y) )$. 

\begin{theorem}\label{thm:sesmackey}
For $X$ and $Y$, $O(2)$ spectra with rational homotopy groups, 
there is a short exact sequence as below.
$$
0 \to 
\ext_{\mathcal{A}(\dscr)}( \underline{\pi}_*( \Sigma X),
\underline{\pi}_*(Y) ) \to 
e_\dscr[X,Y]^{O(2)} \to 
\hom_{\mathcal{A}(\dscr)} ( \underline{\pi}_*( X),
\underline{\pi}_*(Y) ) \to 0
$$
\end{theorem}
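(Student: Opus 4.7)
The plan is to construct the short exact sequence from a length-one projective resolution of $\underline{\pi}_*(X)$ in $\mathcal{A}(\dscr)$, realise this resolution topologically as a cofibre sequence of $S_\dscr$-modules, and then read off the Hom--Ext sequence by applying $e_\dscr[-,Y]^{O(2)}$. This is the dihedral analogue of a universal coefficient theorem, and it works because the abelian category $\mathcal{A}(\dscr)$ has projective dimension at most one.

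First I would verify the dimension bound. Rationally, each $dg\qq$-module and each $dg\qq W$-module is built from semisimple pieces (since $\qq W$ is semisimple and we work over the field $\qq$), so the only homological obstruction in $\mathcal{A}(\dscr)$ lives in the structure map $\sigma_V \co V_\infty \to V(\text{end})$. A projective object is one in which every $V_k$ and $V_\infty$ is free and $\sigma_V$ is a split monomorphism onto a summand of $V(\text{end})$; every object of $\mathcal{A}(\dscr)$ fits into a resolution $0 \to P_1 \to P_0 \to V \to 0$ by such projectives, and consequently $\ext^i_{\mathcal{A}(\dscr)}(-,-) = 0$ for $i \geqslant 2$.

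Next I would realise the resolution topologically. For dihedral $H$ of order $2k$, set $P_H := S_\dscr \smashprod \cofrep e_H O(2)/H_+$; the earlier generator computation gives $e_\dscr [P_H,Y]^{O(2)}_* = e_H \pi_*^H(Y)$, which says exactly that $\underline{\pi}_*$ is full on wedges of the $P_H$ and $S_\dscr$, and that every projective of $\mathcal{A}(\dscr)$ lies in the image of $\underline{\pi}_*$ on such a wedge. Choosing a projective resolution of $\underline{\pi}_*(X)$ and lifting it yields a cofibre sequence $\tilde P_1 \to \tilde P_0 \to X$ of $S_\dscr$-modules whose image under $\underline{\pi}_*$ is the chosen resolution. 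The proof then concludes by applying $e_\dscr[-,Y]^{O(2)}$ to this cofibre sequence: the resulting long exact sequence collapses, the identification $e_\dscr[\tilde P_i,Y]^{O(2)} = \hom_{\mathcal{A}(\dscr)}(\underline{\pi}_*\tilde P_i, \underline{\pi}_*Y)$ produces the surjection in the statement, the connecting homomorphism is the Yoneda $\ext^1$ coming from the resolution, and the suspension in the cofibre-sequence LES accounts for the $\Sigma X$ in the Ext term.

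The main obstacle will be the realisation step. One needs both essential surjectivity of $\underline{\pi}_*$ onto projective objects of $\mathcal{A}(\dscr)$ and fullness of $\underline{\pi}_*$ on wedges of the generators $P_H$ and $S_\dscr$; both ultimately rest on the generator computation that $[P_H, Y]$ is detected by $e_H \pi_*^H$ together with the decomposition of $\pi_*^{O(2)}$ via the idempotents $e_\dscr$ and the $f_n$. A secondary subtlety is arranging the lift so that the structure map $\sigma$ of $\underline{\pi}_*(\tilde P_0)$ matches what is required by the chosen resolution --- this is where the limit--colimit shape of $V(\text{end})$ must be respected, and where the decomposition of $\pi_*^{O(2)}$ into its finite-dihedral pieces via the $f_n$ does the real work.
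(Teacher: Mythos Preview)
The paper does not prove this theorem: it is stated immediately after Definition~\ref{def:dihedralmackey} with the sentence ``The following definition and theorem are taken from \cite{gre98a}'', so there is no proof in the present paper to compare against. Your outline is the standard argument for such an Adams-type short exact sequence and is, in broad strokes, what Greenlees does in the cited paper: establish that $\mathcal{A}(\dscr)$ has global (injective or projective) dimension one, realise a length-one resolution by a cofibre sequence of spectra built from the generators, and apply $[-,Y]$.

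Two points in your write-up are looser than they should be. First, your description of projectives (``$\sigma_V$ a split monomorphism onto a summand of $V(\text{end})$'') is not quite a characterisation; the enough-projectives and dimension-one statements require a genuine argument about the sheaf-theoretic structure of $\mathcal{A}(\dscr)$ near the limit point $O(2)$, and this is where the real content of the cited result lies. Second, in the realisation step you need not only that wedges of $S_\dscr$ and the $P_H$ hit projectives, but that \emph{arbitrary} projectives (including those with nontrivial structure map mixing infinitely many stalks) are realised; this is again handled in \cite{gre98a} and is not automatic from the generator computation alone.
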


The homotopy category of $\mathcal{A}(\dscr)$ agrees
with the algebraic model for dihedral spectra 
in \cite{gre98a}.
We now introduce a construction that we will make much use of, 
we will soon see that this is an explicit description
of the `global sections' of an object of $\mathcal{A}(\dscr)$.

\begin{definition}
Let $n \geqslant 1$ and take $V \in \mathcal{A} (\dscr)$. 
Then $\bignplus_N V$ is defined as the following pullback
in the category of $dg \qq W$-modules. 
$$\xymatrix{
\bignplus_N V
\ar[r]
\ar[d] &
\prod_{k \geqslant N} V_k
\ar[d] \\
V_\infty 
\ar[r] & \colim_n \prod_{k \geqslant n} V_k
}$$
We also have $\bignplus_N^W V: =(\bignplus_N V)^W$ the $W$-fixed points of 
$\bignplus_N V$.
\end{definition}

Note that $V(\text{end})^W = \colim_n \prod_{k \geqslant n} V_k^W$
and that the structure map of $V$ induces a map 
$V_\infty \to V(\text{end})^W$. So we can construct 
$\bignplus_N^W V$ in terms of a pullback of $dg \qq$-modules. 
The notation $\bignplus_N$ is to make the reader think 
of some combination of a direct product and a direct sum.
Indeed if $V_\infty=0$, then $\bignplus_N V = \bigoplus_{k \geqslant n} V_k$. 

Let $\pscr$ be the space $\fcal O(2)/O(2) \setminus \{ SO(2) \}$
($\pscr$ for points) and let $\ocal$ be the constant
sheaf of $\qq$, this is a sheaf of rings.
To specify an $\ocal$-module $M$ one only needs to give
the stalks at the points $k$ and $\infty$ and a 
$\qq$ map $M_\infty \to M(\text{end})$. 
The global sections of $M$ are then given by 
$\bignplus_1 M$. 
One can then consider $W$-equivariant objects in 
$\ocal \leftmod$, we denote this category by $W \ocal \leftmod$. 
The category $\mathcal{A}(\dscr)$ is a full subcategory of 
$W \ocal \leftmod$; we have an inclusion
functor $\text{inc} \co \mathcal{A}(\dscr) \to W \ocal \leftmod$
and this has a right adjoint: $\text{fix}$.
On an $W$-equivariant $\ocal$-module $V$,  $\text{fix}(V)_k = V_k$,
$\text{fix}(V)_\infty = V_\infty^W$ and the structure map
is $V_\infty^W \to V_\infty \to V(\text{end})$. 
Our definitions and constructions are, therefore, slight adjustments 
to the usual definitions of modules over a sheaf of rings.
 
\begin{lemma}
The category $\mathcal{A} (\dscr)$ contains all small limits and colimits.
\end{lemma}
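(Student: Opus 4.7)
\textit{Proof plan.} I would construct all small limits and colimits in $\mathcal{A}(\dscr)$ directly, computing the pieces $V_k$ and $V_\infty$ componentwise in $dg\qq W\leftmod$ and $dg\qq\leftmod$ respectively, and then installing a structure map $\sigma_V$. The only delicate issue is that the formation $V(\text{end}) = \colim_n \prod_{k \geqslant n} V_k$ interacts well with colimits but not with arbitrary limits, since a filtered colimit does not commute past an arbitrary limit through an infinite product.

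For colimits of a diagram $\{V^\alpha\}$, I set $V_k := \colim_\alpha V^\alpha_k$ in $dg\qq W\leftmod$ and $V_\infty := \colim_\alpha V^\alpha_\infty$ in $dg\qq\leftmod$. The canonical maps $V^\alpha_k \to V_k$ induce $V^\alpha(\text{end}) \to V(\text{end})$, so the compatible family $V^\alpha_\infty \xrightarrow{\sigma_{V^\alpha}} V^\alpha(\text{end}) \to V(\text{end})$ factors uniquely through $V_\infty$ to give the structure map $\sigma_V$. Since each $V^\alpha_\infty$ carries trivial $W$-action, so does $V_\infty$, and $\sigma_V$ is automatically a $dg\qq W$-module map. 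The universal property of $V$ as a colimit in $\mathcal{A}(\dscr)$ then reduces to the two componentwise universal properties.

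Limits are more delicate, because the naive choice $V_\infty := \lim_\alpha V^\alpha_\infty$ does not come equipped with a canonical map into $V(\text{end})$. I instead first set $V_k := \lim_\alpha V^\alpha_k$, which fixes $V(\text{end})$, and then take $V_\infty$ to be the pullback in $dg\qq\leftmod$ of the cospan $\lim_\alpha V^\alpha_\infty \to \lim_\alpha V^\alpha(\text{end})^W \leftarrow V(\text{end})^W$, where the right-hand arrow is the canonical comparison map induced by sliding the filtered colimit $\colim_n$ past the limit $\lim_\alpha$, and the left-hand arrow assembles the maps $\sigma_{V^\alpha}$, which land in fixed points because each $V^\alpha_\infty$ has trivial $W$-action. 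The second projection of the pullback is the desired $\sigma_V$, and $W$-equivariance (with trivial action on the source) is automatic by construction. Given a cone $N \to V^\alpha$ in $\mathcal{A}(\dscr)$, the resulting component $N_\infty \to V_\infty$ exists and is unique by the universal property of the pullback, after checking that the two routes $N_\infty \to \lim V^\alpha(\text{end})^W$ agree; this agreement is exactly the naturality of the colim--lim comparison applied to the morphism of diagrams $N \to V^\alpha$.

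The one real obstacle is thus the structure-map bookkeeping for limits, forced by the failure of $\colim_n$ to commute with arbitrary $\lim_\alpha$. Once the pullback recipe is in place, all remaining verifications are straightforward diagram chases. Conceptually, this is exactly the fact that the comma category presentation of $\mathcal{A}(\dscr)$ as $\id \downarrow G$, with $G((V_k)) = \bigl(\colim_n \prod_{k \geqslant n} V_k\bigr)^W$, automatically inherits all small limits and colimits from the complete and cocomplete categories $dg\qq\leftmod$ and $(dg\qq W\leftmod)^{\nn}$.
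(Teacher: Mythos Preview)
Your proof is correct and essentially follows the paper's approach. For colimits you do exactly what the paper does. For limits, the paper defines $(\lim_i V^i)_\infty = \colim_N \lim_i \bignplus_N^W V^i$ using the global-sections functor $\bignplus_N^W$, whereas you write down the pullback of $\lim_\alpha V^\alpha_\infty \to \lim_\alpha V^\alpha(\text{end})^W \leftarrow V(\text{end})^W$ directly; since $\bignplus_N^W V^i$ is itself defined as such a pullback, limits commute with pullbacks, and filtered colimits commute with finite limits in $dg\qq\leftmod$, the two descriptions coincide. Your comma-category remark is a nice repackaging of the same construction; the paper instead phrases the alternative description sheaf-theoretically as $\text{fix}\,\lim_i \text{inc}\,V^i$ in $W\ocal\leftmod$. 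The paper's formulation via $\bignplus_N^W$ has the minor advantage that this functor is reused heavily in the subsequent model-structure arguments, but as a proof of completeness your version is equally valid and arguably more transparent.
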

\begin{proof}
Take some small diagram $V^i$ of objects of $\mathcal{A} (\dscr)$. 
Define $(\colim_i V^i)_\infty = \colim_i (V^i_\infty)$
and $(\colim_i V^i)_k = \colim_i (V^i_k)$. 
The map below induces (via the universal properties of colimits)
a structure map for $\colim_i V^i$.
$$
V^i_\infty \longrightarrow 
\colim_n \prod_{k \geqslant n} V_k^i \longrightarrow
\colim_n \prod_{k \geqslant n} \colim_i V_k^i 
$$
Limits are harder to define because we are working with a 
stalk-based description of a `sheaf'. 
Let $(\lim_i V^i)_k = \lim_i (V^i_k)$ and 
$(\lim_i V^i)_\infty = \colim_N \lim_i \bignplus_N^W V^i$.
The structure map is then as below. 
$$(\lim_i V^i)_\infty = 
\colim_N \lim_i {\bignplus}_N^W V^i \longrightarrow
\colim_N \lim_i \prod_{k \geqslant N} V^i_k = 
(\lim_i V^i)(\text{end})$$
\end{proof}

One could also describe the limit of some diagram $V^i$ 
in $\mathcal{A} (\dscr)$ as $\text{fix} \lim_i \text{inc } V^i$, 
where the limit on the right is taken in the category of $W \ocal$-modules. 
Let $M$ be a $dg \qq$ module, $R$ a $dg \qq W$-module
and $V \in \mathcal{A}(\dscr)$. Then we define
$i_k R$ to be that object of  $\mathcal{A}(\dscr)$
with $(i_k R)_\infty =0$, $(i_k R)_n = 0$ for $n \neq k$ and
$(i_k R)_k = R$. Let $p_k V = V_k$, an object of $dg \qq W \leftmod$
and $p_\infty V = V_\infty$, a 
$dg \qq$-module. The functor $i_\infty$ takes $M$
to that object of  $\mathcal{A}(\dscr)$ with 
$(i_\infty M)_\infty = M$ and $(i_\infty M)_k = 0$.
We set $cM$ to be the object of $\mathcal{A}(\dscr)$
with $cM_k = M = cM_\infty$ and structure map induced by the 
diagonal map $M \to \prod_{k \geqslant 1} M$. 

\begin{lemma}\label{lem:manyadjoints}
We have the following series of adjoint pairs.
$$
\begin{array}{rcl}
i_k : dg \qq W \leftmod & \overrightarrow{\longleftarrow} &
\mathcal{A}(\dscr) : p_k \\
p_k : \mathcal{A}(\dscr) & \overrightarrow{\longleftarrow} &
dg \qq W \leftmod  : i_k \\
p_\infty : \mathcal{A}(\dscr) & \overrightarrow{\longleftarrow} &
dg \qq \leftmod : i_\infty\\
c : dg \qq \leftmod & \overrightarrow{\longleftarrow} &
\mathcal{A}(\dscr) : \bignplus_1^W \\
\end{array}
$$
\end{lemma}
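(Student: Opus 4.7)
The plan is to verify each of the four adjunctions directly by unpacking the universal property on both sides and matching the data. In every case, the bookkeeping reduces to inspecting the commuting square that defines a morphism of $\mathcal{A}(\dscr)$, together with keeping track of the trivial $W$-action convention on $dg \qq$-modules viewed as $dg \qq W$-modules.

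For the first two pairs involving $i_k$ and $p_k$, the key observation is that $(i_k R)(\text{end}) = \colim_n \prod_{k' \geqslant n} (i_k R)_{k'} = 0$, because for $n > k$ the product is already zero and the transition maps kill the single $R$-component at earlier stages. Consequently, the commuting square condition is automatic for any morphism whose source or target is $i_k R$, and a morphism reduces to a single $dg \qq W$-module map at position $k$. This immediately gives bijections $\hom_{\mathcal{A}(\dscr)}(i_k R, V) \cong \hom_{dg \qq W}(R, V_k)$ and $\hom_{\mathcal{A}(\dscr)}(V, i_k R) \cong \hom_{dg \qq W}(V_k, R)$, so that $i_k \dashv p_k$ and $p_k \dashv i_k$. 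The pair $p_\infty \dashv i_\infty$ is analogous: since $(i_\infty M)_k = 0$ for all $k \geqslant 1$, the right-hand side of the square vanishes and a map $V \to i_\infty M$ is exactly a $dg \qq$-map $V_\infty \to M$.

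The only adjunction with real content is $c \dashv \bignplus_1^W$. A morphism $cM \to V$ consists of a $dg \qq$-map $f_\infty \co M \to V_\infty$ and $dg \qq W$-maps $f_k \co M \to V_k$; because $M$ carries the trivial $W$-action, each $f_k$ factors through $V_k^W$. The commuting square, combined with the fact that $\sigma_{cM}$ is the diagonal $M \to \colim_n \prod_{k \geqslant n} M$, is exactly the condition that $\sigma_V \circ f_\infty$ and the image of the tuple $(f_k)_k$ agree in $V(\text{end})$. By the pullback definition of $\bignplus_1 V$ and the subsequent passage to $W$-fixed points, this is precisely the data of a $dg \qq$-map $M \to \bignplus_1^W V$, and the resulting bijection is natural in both variables.

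The main obstacle, modest as it is, will be handling the fourth pair: one has to correctly identify the tuples in the pullback $\bignplus_1^W V$ with the compatible collections $(f_\infty, (f_k)_k)$ arising from the commuting square, and check that the $W$-equivariance of each $f_k$ (forced by the trivial action on $M$) matches the $W$-fixed-point requirement on the second factor of the pullback. Once these identifications are made, all four adjunctions drop out by a formal computation, with the earlier three being completely routine thanks to the vanishing of $(i_k R)(\text{end})$ and $(i_\infty M)(\text{end})$.
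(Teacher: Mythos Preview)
Your proposal is correct and follows the same direct-verification approach that the paper declares routine; the paper's own proof is a single sentence noting that the check is routine and remarking that $(c,\bignplus_1^W)$ is the constant-sheaf/global-sections adjunction. Your write-up simply spells out what the paper leaves implicit, and the only thing you might add is that sheaf-theoretic gloss on the fourth pair.
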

\begin{proof}
This is a routine check, we just wish to comment
that $(c, \bignplus_1^W)$ is the constant sheaf and global sections
adjunction. 
\end{proof}

\begin{lemma}
The category $\mathcal{A}(\dscr)$ is a closed symmetric monoidal category. 
Furthermore there is  
a symmetric monoidal adjunction as below.
$$c : dg \qq \leftmod \overrightarrow{\longleftarrow} 
\mathcal{A}(\dscr) : {\bignplus}^W_1$$
\end{lemma}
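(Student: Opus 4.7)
The plan is to equip $\mathcal{A}(\dscr)$ with a tensor product and unit defined stalkwise, construct an internal hom from the required universal property, and then verify that the constant functor $c$ is strong symmetric monoidal, so that its right adjoint $\bignplus_1^W$ inherits a lax symmetric monoidal structure by formal adjunction.

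First I would define $V \otimes W$ by $(V \otimes W)_k := V_k \otimes_\qq W_k$ (as a $dg \qq W$-module with the diagonal action) and $(V \otimes W)_\infty := V_\infty \otimes_\qq W_\infty$, with structure map the composite
$$V_\infty \otimes W_\infty \xrightarrow{\sigma_V \otimes \sigma_W} V(\text{end}) \otimes W(\text{end}) \longrightarrow (V \otimes W)(\text{end}),$$
where the second arrow is induced by the canonical interchange $\prod_{k \geqslant n} V_k \otimes \prod_{k \geqslant n} W_k \to \prod_{k \geqslant n}(V_k \otimes W_k)$ combined with the fact that $\otimes_\qq$ commutes with filtered colimits. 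The monoidal unit is $c\qq$: on each stalk this is $\qq$, and the structure map of $c\qq \otimes V$ agrees with $\sigma_V$ because $\sigma_{c\qq}$ is the diagonal. Associativity, symmetry, and the coherence diagrams all reduce to their stalkwise analogues in $dg\qq \leftmod$ and $dg\qq W \leftmod$, using that the shuffle map above is itself associative, natural, and symmetric.

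For the closed structure, set $[V,W]_k := \hom_\qq(V_k, W_k)$ with the conjugation $W$-action. The $\infty$-component must be chosen so that $\hom_{\mathcal{A}(\dscr)}(X \otimes V, W) \cong \hom_{\mathcal{A}(\dscr)}(X, [V,W])$ holds naturally in $X$. Unpacking, a morphism $X \otimes V \to W$ consists of maps $(f_\infty, (f_k))$ on stalks together with the commuting square $\sigma_W \circ f_\infty = (\colim_n \prod_{k \geqslant n} f_k) \circ \sigma_{X \otimes V}$; passing through the usual hom-tensor adjunctions at each stalk, this translates into a $dg\qq$-map $X_\infty \to \hom_\qq(V_\infty, W_\infty)$ whose image is forced to lie in a specific equalizer built from precomposition with $\sigma_V$, postcomposition with $\sigma_W$, and the $W$-fixed points of the induced tail map. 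Defining $[V, W]_\infty$ as that equalizer (in $dg\qq \leftmod$) produces the internal hom, and the triangle identities follow by unwinding.

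Finally, $c$ is strong symmetric monoidal: $c\qq$ is by construction the unit, the comparison $c(M \otimes N) \xrightarrow{\cong} cM \otimes cN$ is the identity on each stalk, and the structure maps on both sides are the diagonal on $M \otimes N$, so the associator, unitor, and symmetry coherences are immediate. A strong monoidal left adjoint automatically endows its right adjoint with a lax symmetric monoidal structure via the mate construction, so the displayed pair is a symmetric monoidal adjunction. The main obstacle is the precise construction of $[V, W]_\infty$: every other step is a stalkwise repetition of classical tensor-hom arguments or a formal consequence of having a strong monoidal left adjoint, but identifying the correct equalizer that encodes the structure-map compatibility is where all the real work lies.
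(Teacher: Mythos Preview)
Your tensor product and the verification that $c$ is strong symmetric monoidal are correct and agree with the paper's argument. The gap is in your construction of the internal hom: you assert that $[V,W]_\infty$ is an equalizer sitting inside $\hom_\qq(V_\infty, W_\infty)$, but this is false in general. Take $V$ with $V_\infty = 0$, $V_k = \qq$ (trivial $W$-action) for all $k$, and structure map zero; take $W = c\qq$. Then $\hom_\qq(V_\infty, W_\infty) = \hom_\qq(0,\qq) = 0$, so any subobject is zero. But unwinding the required adjunction gives $\hom_{\mathcal{A}(\dscr)}(X \otimes V, c\qq) \cong \prod_k \hom_{\qq W}(X_k, \qq)$, which forces $[V,W]_\infty \cong \colim_n \prod_{k \geqslant n} \qq \neq 0$ with structure map the identity. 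The point is that an element of $[V,W]_\infty$ must record not only a map $V_\infty \to W_\infty$ but also a \emph{germ} of maps $V_k \to W_k$ for large $k$, and there is no way to recover that germ from the map at $\infty$ alone.

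The paper sidesteps this difficulty by working inside the ambient category $W\ocal\leftmod$ of $W$-equivariant modules over the constant sheaf $\ocal$ on the space $\pscr$. That category is closed symmetric monoidal by standard sheaf theory, and $\mathcal{A}(\dscr)$ sits inside it as a full reflective subcategory via the adjunction $(\text{inc}, \text{fix})$. Since $\text{inc}$ preserves the tensor product, the internal hom in $\mathcal{A}(\dscr)$ is simply $\text{fix}\,\hom_\ocal(B,C)$; the stalk of the sheaf hom at $\infty$ is exactly the colimit of germs described above, and applying $\text{fix}$ then takes $W$-fixed points at $\infty$. Your direct approach can be repaired---one must build $[V,W]_\infty$ as a filtered colimit of pullbacks rather than as a subobject of $\hom_\qq(V_\infty, W_\infty)$---but the sheaf-theoretic packaging is precisely what makes the construction transparent and avoids the error you flagged as ``where all the real work lies''.
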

\begin{proof}
The category of $\ocal$-modules is closed symmetric monoidal, 
hence so is the category $ W \ocal \leftmod$.
That is, for $M$ and $N$ in $ W \ocal \leftmod$, 
$W$ acts diagonally on $(M \otimes_\ocal N)(U)$
and by conjugation on $\hom_\ocal(M,N)(U)$
(for $U$ an open subset of $\pscr$).
We `restrict' this structure to $\mathcal{A}(\dscr)$.
Take $A$, $B$ and $C$ in $\mathcal{A}(\dscr)$, 
the tensor product $A \otimes_\ocal B$ is in 
$\mathcal{A}(\dscr)$ and is given by:
$(A \otimes_\ocal B)_k = A_k \otimes_\qq B_k$, 
$(A \otimes_\ocal B)_\infty = A_\infty \otimes_\qq B_\infty $
and the structure map is given by the composite of the three maps below.
$$\begin{array}{rcl}
A_\infty \otimes B_\infty & \longrightarrow & 
\colim_n (\prod_{k \geqslant n} A_k) \otimes
\colim_n (\prod_{k \geqslant n} B_k) \\
\colim_n (\prod_{k \geqslant n} A_k \otimes
\prod_{k \geqslant n} B_k)
& \overset{\cong}{\longrightarrow} & 
\colim_n (\prod_{k \geqslant n} A_k) \otimes
\colim_n (\prod_{k \geqslant n} B_k) \\
\colim_n (\prod_{k \geqslant n} A_k \otimes
\prod_{k \geqslant n} B_k)
& \longrightarrow & 
\colim_n \prod_{k \geqslant n} (A_k \otimes B_k)
\end{array}$$
We now have a series of natural isomorphisms, 
where we suppress notation for $\text{inc}$.
$$\begin{array}{rcl}
\mathcal{A}(\dscr) (A \otimes_\ocal B, C) & = &
W \ocal \leftmod (A \otimes_\ocal B, C) \\
& \cong &
W \ocal \leftmod (A , \hom_\ocal (B, C)) \\
& \cong &
\mathcal{A}(\dscr) (A, \text{fix} \hom_\ocal (B, C)) 
\end{array}$$
Thus we take the internal homomorphism object for 
$\mathcal{A}(\dscr)$ to be $\text{fix} \hom_\ocal (B, C)$.
It is routine to prove that $(c, \bignplus_1^W)$ is a strong 
symmetric monoidal adjunction.
\end{proof}

Our model structure is an alteration of the
flat model structure from \cite{hov01sheaves},
noting that every sheaf on $\pscr$ is automatically flasque
so the fibrations are precisely the stalk-wise surjections.
Let $I_\qq$ and $J_\qq$ denote the sets of generating 
cofibrations and 
acyclic cofibrations for the projective model structure on 
$dg \qq \leftmod$, see \cite[Section 2.3]{hov99}. 
Similarly we have $I_{\qq W}$ and $J_{\qq W}$ for 
$dg \qq W \leftmod$.

\begin{theorem}
Define a map $f$ in $\mathcal{A}(\dscr)$ to be 
a weak equivalence or fibration
if $f_\infty$ and each $f_k$ is 
(in $dg \qq \leftmod$ and $dg \qq W \leftmod$
respectively). This defines a monoidal
model structure on the category $\mathcal{A} (\dscr)$.
Furthermore this model structure is cofibrantly generated, 
proper and satisfies the monoid
axiom. The generating cofibrations, $I$, are the collections
$cI_\qq$ and $i_k I_{\qq W}$, for $k \geqslant 1$. The generating
acyclic cofibrations, $J$, are 
$cJ_\qq$ and $i_k J_{\qq W}$, for $k \geqslant 1$. 
\end{theorem}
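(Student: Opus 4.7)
The plan is to apply Kan's recognition theorem for cofibrantly generated model categories (\cite[Theorem 2.1.19]{hov99}) with the given sets $I$ and $J$. The classes of weak equivalences and fibrations inherit the two-out-of-three, retract and closure properties from their stalk-wise definitions in terms of $dg \qq \leftmod$ and $dg \qq W \leftmod$, and smallness of the domains of $I$ and $J$ transfers from those two underlying categories via the left adjoints $i_k$ and $c$ of Lemma \ref{lem:manyadjoints}.

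The first substantive step is to show that every relative $J$-cell complex is a weak equivalence. Because colimits in $\mathcal{A}(\dscr)$ are computed stalk-wise, a pushout of $cj$ with $j \in J_\qq$ decomposes into a pushout of $j$ at the stalk $\infty$ and a pushout of $j$ with trivial $W$-action at each finite stalk $k$; cells of the form $i_k j$ only affect the single stalk $k$. Transfinite compositions behave similarly, reducing the claim to the corresponding facts in $dg \qq \leftmod$ and $dg \qq W \leftmod$.

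The more delicate step is identifying the fibrations. Using Lemma \ref{lem:manyadjoints}, a map $f$ has the RLP with respect to $J$ iff each $p_k f$ is a fibration in $dg \qq W \leftmod$ and $\bignplus_1^W f$ is a fibration in $dg \qq \leftmod$. I would then argue that, given stalk-wise surjectivity of $f_\infty$ and each $f_k$, the exactness of $(-)^W$ in characteristic zero together with the degree-wise surjectivity of $\prod_{k \geqslant 1} X_k^W \to X(\text{end})^W$ allows one to combine a lift of $y_\infty$ with coherent lifts of $(y_k)$ into a lift in $\bignplus_1^W X$; conversely, any element $y_\infty \in Y_\infty$ can be extended, using the same surjection to represent $\sigma_Y(y_\infty)$, to an element of $\bignplus_1^W Y$ that then lifts through $\bignplus_1^W f$ and projects to the required element of $X_\infty$. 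This is the main obstacle of the proof. The analogous computation with $I$ in place of $J$ identifies trivial fibrations, so Kan's theorem applies.

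For the monoidal part I would check the pushout-product and unit axioms on the generating sets using the identifications $cA \otimes_\ocal cB \cong c(A \otimes B)$, $i_k A \otimes_\ocal i_k B \cong i_k(A \otimes B)$ and $i_j A \otimes_\ocal i_k B = 0$ for $j \neq k$; these reduce every case to the corresponding axioms in $dg \qq \leftmod$ and $dg \qq W \leftmod$, both of which hold since we work over a field of characteristic zero. The unit $c \qq$ is cofibrant, being the image of the cofibrant unit of $dg \qq \leftmod$ under the left Quillen functor $c$. The monoid axiom reduces by the same stalk-wise decomposition. Left properness is stalk-wise because pushouts are computed stalk-wise and each underlying category is left proper; right properness holds stalk-wise at each finite $k$, and at $\infty$ one uses the formula $(\lim_i V^i)_\infty = \colim_N \lim_i \bignplus_N^W V^i$ together with the preservation of weak equivalences of chain complexes by filtered colimits and by the pullbacks in question to propagate the property through.
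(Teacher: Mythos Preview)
Your proposal is correct and follows essentially the same route as the paper: Kan's recognition theorem via the adjunctions of Lemma~\ref{lem:manyadjoints}, stalk-wise analysis for $J$-cells, the pushout-product and monoid axioms, and the $\bignplus_N^W$ filtration for right properness. Two small remarks. First, your claim that smallness ``transfers via the left adjoints $i_k$ and $c$'' hides the only non-formal ingredient: for $c$ this requires that its right adjoint $\bignplus_1^W$ preserve filtered colimits, which is exactly the content of Lemma~\ref{lem:globseccolim} (the paper invokes this lemma explicitly here). Second, your ``analogous computation with $I$'' for trivial fibrations needs more than the element-chase you give for $J$: to pass from $\bignplus_1^W f$ being a quasi-isomorphism to $f_\infty$ being one, the paper uses the splitting $\bignplus_1^W V \cong \bignplus_N^W V \oplus \bigoplus_{k<N} V_k^W$ and then takes the colimit over $N$; conversely, to show a stalk-wise acyclic fibration has $\bignplus_1^W f$ a quasi-isomorphism it invokes that $\bignplus_1^W$ is a homotopy limit. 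Your direct lifting argument handles surjectivity cleanly but does not by itself give the homology statement, so you should either supply the kernel/cokernel argument over $\qq$ or appeal to the same filtration.
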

\begin{proof}
We prove that the above sets of maps define a model structure
using \cite[Theorem 2.1.19]{hov99}. 
We prove in Lemma \ref{lem:globseccolim} below that ${\bignplus}^W_1$
preserves filtered colimits. From this it follows
that the domains of the generating cofibrations and
acyclic cofibrations are small. 

We identify the maps with the right lifting property with 
respect to $I$. Let $f \co A \to B$ be such a map, 
using the adjunctions of 
Lemma \ref{lem:manyadjoints} it follows that each $f_k \co A_k \to B_k$
must be a surjection and a homology isomorphism,
as must $\bignplus_1^W f \co \bignplus_1^W A \to \bignplus_1^W B$. 
In turn, each $f_k^W$ is a homology isomorphism 
and a surjection so 
$\bignplus_n^W f$ is a surjection and a homology isomorphism
for each $n \geqslant 1$.. 
Taking colimits over $n$ we see that $f_\infty$ is a 
surjection and homology isomorphism. 

Now take a map $f \co A \to B$ that is stalk-wise 
a surjection and homology
isomorphism. Since $\bignplus_1^W A$ and $\bignplus_1^W B$ 
are homotopy limits in $dg \qq \leftmod$ 
we see that $\bignplus_1^W f$ is a homology isomorphism. 
It is also a surjection by similar arguments to that of
Lemma \ref{lem:globseccolim}. Hence $f$ has the right
lifting property with respect to $I$.
Similarly the maps
with the right lifting property with 
respect to $J$ are precisely the stalk-wise surjections. 

To complete the proof that we have a cofibrantly generated model category we 
must prove that transfinite compositions of pushouts of elements
of $J$ are weak equivalences that are $I$-cofibrations. 
Showing that such maps are $I$-cofibrations is routine. 
The maps in $J$ are stalk-wise injective homology isomorphisms,
such maps are preserved by pushouts and transfinite compositions
and so the result follows. 

Checking the pushout product axiom for the generators is routine.
The monoid axiom holds because the functors $p_k$ and 
$p_\infty$ are strong monoidal left adjoints such that if 
each $p_k f$ and $p_\infty f$ are weak equivalences 
then $f$ is a weak equivalence in $\mathcal{A}(D)$. 

Left properness is immediate because colimits are defined stalk-wise. 
Right properness only requires work over $\infty$, let $P$
be the pull back of $X \to Z \leftarrow Y$, with 
$X \to Z$ a weak equivalence and $Y \to Z$ a fibration. 
Applying the right Quillen functor $\bignplus_n^W$ 
gives a pullback diagram in $dg \qq \leftmod$, which is right proper.
Hence $\bignplus_n^W P \to \bignplus_n^W X$ is a weak equivalence
for each $n \geqslant 1$. Taking colimits over $n$
shows that $P_\infty \to X_\infty$ is a homology isomorphism.  
\end{proof}

\begin{lemma}\label{lem:globseccolim}
The functors
${\bignplus}_n$
preserves filtered colimits
for all $n \geqslant 1$.
\end{lemma}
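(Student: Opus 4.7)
The plan is to unfold $\bignplus_N V$ as a pullback in $dg \qq W \leftmod$ and reduce the statement to a careful comparison of filtered colimits with two pieces of data: the infinite product $\prod_{k \geqslant N} V_k$ and the sequential colimit $V(\text{end}) = \colim_n \prod_{k \geqslant n} V_k$. By definition $\bignplus_N V$ is the pullback of $V_\infty \to V(\text{end}) \leftarrow \prod_{k \geqslant N} V_k$, so given a filtered diagram $\{V^i\}_i$ with colimit $V^\dagger := \colim_i V^i$, exactness of filtered colimits in the abelian category $dg \qq W \leftmod$ identifies $\colim_i \bignplus_N V^i$ with the pullback
\[
\colim_i V^i_\infty \times_{\colim_i V^i(\text{end})} \colim_i \prod_{k \geqslant N} V^i_k ,
\]
so the question becomes whether the canonical comparison map from this to $\bignplus_N V^\dagger$ is an isomorphism.

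The two canonical maps $\colim_i \prod_{k \geqslant N} V^i_k \to \prod_{k \geqslant N} V^\dagger_k$ and $\colim_i V^i(\text{end}) \to V^\dagger(\text{end})$ are not isomorphisms in general, because infinite products do not commute with filtered colimits. The key observation that rescues the lemma is that the pullback condition reduces the \emph{infinite} matching problem to a \emph{finite} one: an element $(v_\infty, (w_k)_{k \geqslant N})$ of $\bignplus_N V^\dagger$ satisfies $\sigma_{V^\dagger}(v_\infty) = [(w_k)_{k \geqslant N}]$ in $V^\dagger(\text{end})$, and decoding this equality inside the sequential colimit $\colim_n \prod_{k \geqslant n} V^\dagger_k$ produces an integer $M$ beyond which the coordinates $w_k$ are forced to agree with any fixed representative of $\sigma_{V^\dagger}(v_\infty)$.

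From here I would prove surjectivity of the comparison map $\Phi \co \colim_i \bignplus_N V^i \to \bignplus_N V^\dagger$ by choosing a representative $v_\infty^j \in V^j_\infty$ of $v_\infty$ together with a representative $(s_k)_{k \geqslant n_0} \in \prod_{k \geqslant n_0} V^j_k$ of $\sigma_{V^j}(v_\infty^j)$, lifting the finitely many coordinates $w_k$ with $N \leqslant k < M$ to representatives in stages $V^{i_k}$, and then using filteredness of the indexing category to find a single stage $V^{j'}$ dominating $j, i_N, \dots, i_{M-1}$ over which all of these data assemble into an element of $\bignplus_N V^{j'}$. The pullback condition at stage $j'$ holds because it holds on the tail $k \geqslant M$ by construction and because membership in $V^{j'}(\text{end})$ is a tail condition. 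Injectivity is handled by the same finite-matching argument applied to two lifts that become equal after passage to $V^\dagger$. The heart of the argument, and the main obstacle, is recognising that the apparent problem (that infinite products do not commute with filtered colimits) is defused by the specific pullback shape of $\bignplus_N$: the structure map $\sigma_V$ ties the tail of the product to a single element of $V_\infty$ and thereby converts an infinite coherence problem into a finite one on which filteredness has bite.
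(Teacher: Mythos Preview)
Your proposal is correct and follows essentially the same route as the paper: both arguments hinge on the observation that the pullback condition ties the tail of the product to the single datum $v_\infty$, so that any discrepancy between a given element and a lift built from a stage-level representative of $\sigma(v_\infty)$ is confined to finitely many coordinates, which filteredness then handles. The only cosmetic differences are that the paper first uses the splitting $\bignplus_1 X \cong \bignplus_n X \oplus \bigoplus_{1 \leqslant k < n} X_k$ to reduce to $n=1$ before running the element chase, whereas you work directly with a general $N$ and preface the argument with the categorical remark that filtered colimits commute with pullbacks in $dg\,\qq W\leftmod$.
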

\begin{proof}
One has a canonical map
$\colim_i \bignplus_1 V^i \to \bignplus_1 \colim_i V^i $.
For $X \in \mathcal{A}(\dscr)$, we write elements of 
$\bignplus_1 X$ as $(x_\infty, x_1, x_2, \dots)$, 
for $x_\infty \in X_\infty$ and $x_i \in X_i$, such that 
$x_\infty$ and $(x_1, x_2, \dots)$ agree in $X(\text{end})$. 
For each $n >1$ there is an isomorphism of $dg\qq W$-modules
$\bignplus_1 X \cong \bignplus_{n} X \oplus \bigoplus_{1 \leqslant k < n} X_k$,
which sends 
$(x_\infty, x_1, x_2, \dots)$ to the term
$((x_\infty, x_n, x_{n+1}, \dots),x_1,x_2, \dots, x_{n-1})$. 
Hence, for each $n > 1$ we have specified an isomorphism between 
$\colim_i \bignplus_1 V^i$ and 
$\colim_i \bignplus_{n} V^i \oplus \bigoplus_{1 \leqslant k < n} \colim_i V^i_k$.

It suffices to prove the result for $n=1$, 
let $\{ V^i \}_{i \in I}$ be a filtered diagram 
in $\mathcal{A}(\dscr)$.
Take some $(v_\infty, v_1, v_2, \dots)$ in 
$V^j$ which maps to zero in $\bignplus_1 \colim_i V^i$.  
Thus there is some $m \in I$, with a map $j \to m$ in $I$
such that $v_\infty$ is sent to zero in $V^m_\infty$.
Consider $(0, w_1, w_2, \dots) \in \bignplus_1 V^m$, the image of 
$(v_\infty, v_1, v_2, \dots)$. Since the term at infinity is zero, 
only finitely many $w_k$ are non-zero, let $N$ be such that
for $k \geqslant N$, $w_k=0$. 
Note that each $w_k$ is zero in $\colim_i V^i_k$.
Write $\bignplus_1 V^m$ as 
$\bignplus_{N} V^m \oplus \bigoplus_{1 \leqslant k < N} V^m_k$.
Thus $(0, w_1, w_2, \dots)$ is given by 
$((0,0, \dots), w_1, w_2, \dots, w_{N-1})$ in
$\bignplus_{N} V^m \oplus \bigoplus_{1 \leqslant k < N} V^m_k$.
The element $((0,0, \dots), w_1, w_2, \dots, w_{N-1})$ 
is clearly zero in the colimit, hence so are 
$(0, w_1, w_2, \dots)$ and $(v_\infty, v_1, v_2, \dots)$. 


Now we consider surjectivity, take 
$([z_\infty], [z_1], [z_2], ) \in \bignplus_1 \colim_i V^i $.
So there is some $j \in I$ with
$z_\infty \in V^j_\infty$. Pick a representative
$(y_1, y_2, \dots) \in V^j(\text{end})$
for the image of $z_\infty$.
Thus we have $(z_\infty, y_1, y_2, \dots) \in \bignplus_1 V^j$.
The difference between the image of 
$(z_\infty, y_1, y_2, \dots)$ in $\bignplus_1 \colim V^i$ and $([z_\infty], [z_1], [z_2], )$
has only finitely many non-zero terms.
We now proceed as for injectivity.
\end{proof}

\begin{corollary}
The adjunctions of Lemma \ref{lem:manyadjoints}
are strong symmetric monoidal Quillen pairs.
\end{corollary}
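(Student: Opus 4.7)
The plan is to verify, for each of the four adjunctions listed in Lemma \ref{lem:manyadjoints}, that the adjunction is a Quillen pair, and additionally that when the left adjoint is strong symmetric monoidal (namely for $p_k$, $p_\infty$ and $c$) the Quillen pair is strong symmetric monoidal. The key observation is that the weak equivalences and fibrations of $\mathcal{A}(\dscr)$ are defined stalk-wise, so most of the work reduces to elementary checks on the $k$-th and $\infty$-th stalks.

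First I would handle the Quillen parts. For $(i_k,p_k)$ and $(p_\infty,i_\infty)$ viewed as having $p_k$ or $i_\infty$ on the right, I would show the right adjoint preserves fibrations and acyclic fibrations: for $p_k$ this is tautological from the definition of weak equivalences/fibrations in $\mathcal{A}(\dscr)$; for $i_\infty$ one just notes that $i_\infty M$ is zero at every finite $k$, so $i_\infty f$ is stalk-wise an (acyclic) fibration iff $f$ is. The adjunction $(p_k,i_k)$ is symmetric: $i_k$ places the module at stalk $k$ and zero elsewhere, so the same argument applies. Finally, for $(c,\bignplus_1^W)$ I would argue at the level of generators: $c$ takes the generating (acyclic) cofibrations $I_\qq$ and $J_\qq$ of $dg\qq\leftmod$ directly into the generating sets $cI_\qq \subseteq I$ and $cJ_\qq \subseteq J$ for $\mathcal{A}(\dscr)$, and hence $c$ preserves all (acyclic) cofibrations by the standard small object / retract argument.

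For the monoidal parts, I would use the explicit stalk-wise description of $\otimes_\ocal$ from the previous lemma, together with the identification of the unit of $\mathcal{A}(\dscr)$ as $c\qq$. Then $p_k(A\otimes_\ocal B) = A_k\otimes_\qq B_k = p_kA\otimes p_kB$ and $p_k(c\qq) = \qq$, giving a natural strong symmetric monoidal structure on $p_k$; the same argument works verbatim for $p_\infty$. For $c$ the strong symmetric monoidal structure was already proved in the preceding lemma. Combining these with the Quillen checks above yields strong symmetric monoidal Quillen pairs for $(p_k,i_k)$, $(p_\infty,i_\infty)$ and $(c,\bignplus_1^W)$; the remaining adjunction $(i_k,p_k)$ is a Quillen pair but only its right adjoint is monoidal, since $i_k$ does not preserve the unit.

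I don't expect a real obstacle here; the potential subtlety is only bookkeeping about which adjunctions carry a genuine strong symmetric monoidal structure (the three listed above) versus which are merely Quillen, and making sure the monoid/pushout-product observations already used in the preceding theorem are reapplied consistently at the stalk level.
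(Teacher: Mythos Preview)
The paper gives no proof for this corollary, treating it as immediate from the stalk-wise definition of the model structure and the earlier monoidal lemma; your proposal correctly supplies exactly the routine checks the paper leaves implicit, and in the same spirit (right adjoints preserve stalk-wise fibrations and acyclic fibrations, $c$ sends generators to generators, and the monoidal structure is verified stalk-wise).

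Your final caveat is in fact sharper than the paper's blanket statement: $i_k$ preserves binary tensor products but sends the unit $\qq$ to $i_k\qq \neq c\qq$, so strictly speaking $(i_k,p_k)$ is only a Quillen pair with a non-unital strong monoidal left adjoint. The paper's phrasing glosses over this, but nothing downstream relies on $i_k$ preserving the unit --- the monoid axiom argument in the preceding theorem and the later use of generators only need $p_k$, $p_\infty$ and $c$ to be strong monoidal, which you verify.
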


\begin{lemma}
The collection $i_k \qq W$ for $k \geqslant 1$
and $c \qq$ are a set of compact, 
cofibrant and fibrant generators for this category.
\end{lemma}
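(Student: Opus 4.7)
The plan is to verify each of the four properties---cofibrancy, fibrancy, compactness, and generation---by pushing statements about $i_k \qq W$ and $c \qq$ through the adjunctions of Lemma \ref{lem:manyadjoints}, which by the preceding corollary are strong symmetric monoidal Quillen pairs. Cofibrancy is then immediate because $\qq W \in dg \qq W \leftmod$ and $\qq \in dg \qq \leftmod$ are cofibrant (each is free of rank one), and $i_k$ and $c$ are left Quillen functors. Fibrancy is equally immediate: in the projective model structures on $dg \qq \leftmod$ and $dg \qq W \leftmod$ every object is fibrant, and fibrations in $\mathcal{A}(\dscr)$ are detected stalk-wise, so $i_k \qq W$ (which is zero at every other stalk) and $c \qq$ are fibrant.

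Since the generators are cofibrant and every object is fibrant, the adjunctions compute
$$[i_k \qq W, V]_* = H_*(V_k), \qquad [c \qq, V]_* = H_*({\bignplus}_1^W V).$$
For compactness I need $p_k$ and ${\bignplus}_1^W$ to commute with arbitrary coproducts in $\mathcal{A}(\dscr)$. The functor $p_k$ is a left adjoint and so commutes with all colimits. For ${\bignplus}_1^W$, write any coproduct as a filtered colimit of its finite sub-coproducts; finite coproducts coincide with finite products in this additive category, on which the right adjoint ${\bignplus}_1^W$ commutes, and Lemma \ref{lem:globseccolim} together with the exactness of $W$-fixed points over $\qq$ handles the filtered colimit. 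Homology commuting with direct sums then yields compactness.

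The main work lies in generation. Suppose $[i_k \qq W, V]_* = 0$ for every $k \geqslant 1$ and $[c \qq, V]_* = 0$; I must show $V$ is weakly equivalent to zero. The first hypothesis gives $H_*(V_k) = 0$ for all $k$, so each $V_k^W$ is acyclic (fixed points of a finite group are exact rationally), and hence both $\prod_{k \geqslant 1} V_k^W$ and its filtered colimit quotient $V(\text{end})^W$ are acyclic. The defining pullback square for ${\bignplus}_1^W V$ has the right-hand vertical map $\prod_{k \geqslant 1} V_k^W \to V(\text{end})^W$ surjective (lift any representative in some $\prod_{k \geqslant N} V_k^W$ by padding with zeros), so it yields a short exact sequence of chain complexes
$$ 0 \longrightarrow {\bignplus}_1^W V \longrightarrow V_\infty \oplus \prod_{k \geqslant 1} V_k^W \longrightarrow V(\text{end})^W \longrightarrow 0. $$
The associated long exact sequence, combined with the vanishing of the two right-hand terms, identifies $H_*(V_\infty)$ with $H_*({\bignplus}_1^W V) = 0$. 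Thus $V_\infty$ is acyclic too, and $V \simeq 0$. The pullback-to-short-exact-sequence manoeuvre at the infinity stalk is the only non-formal step; everything else is extracted directly from the adjunctions and the already-established Lemma \ref{lem:globseccolim}.
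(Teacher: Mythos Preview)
Your proof is correct and follows the same overall architecture as the paper: cofibrancy via left Quillen functors, fibrancy because everything is fibrant, and the adjunctions of Lemma~\ref{lem:manyadjoints} to identify $[i_k \qq W, V]_*$ and $[c\qq, V]_*$ with the appropriate stalk-level homology groups. You also treat compactness explicitly via Lemma~\ref{lem:globseccolim}, which the paper's proof omits.

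The one substantive difference is in the step showing $H_*(V_\infty)=0$. The paper uses the splitting ${\bignplus}_1^W V \cong {\bignplus}_N^W V \oplus \bigoplus_{1\leqslant k<N} V_k^W$ to deduce that every ${\bignplus}_N^W V$ is acyclic, and then invokes $V_\infty \cong \colim_N {\bignplus}_N^W V$ together with filtered colimits commuting with homology. You instead run the Mayer--Vietoris short exact sequence coming directly from the defining pullback square, using that products and filtered colimits of acyclic $\qq$-complexes are acyclic. Both arguments are sound; yours is perhaps more self-contained since it does not require the identification $V_\infty \cong \colim_N {\bignplus}_N^W V$, while the paper's splitting argument is more elementary in that it avoids the surjectivity check and the long exact sequence.
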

\begin{proof}
That these are cofibrant follows immediately from
the fact that $\qq$ and $\qq W$ are cofibrant in 
$dg \qq \leftmod $ and $dg \qq W \leftmod $ respectively. 

Take $V$ in $\mathcal{A}(\dscr)$, we must show that
$M \to 0$ is a weak equivalence if and only if
$[G,M]_*^{\mathcal{A}(\dscr)}=0$ for each object in our set.
First we know (since every object is fibrant) that
$[i_k \qq W, V]_*^{\mathcal{A}(\dscr)} \cong 
[\qq W, V_k]_*^{\qq W}$, where the right hand side is 
maps in $\ho dg \qq W \leftmod$.
Secondly $[c \qq, V]^{\mathcal{A}(\dscr)}_* \cong [\qq, \bignplus_1^W V ]^\qq_*$,
with the right hand side is maps in the 
homotopy category of $dg \qq \leftmod$. 
Since $\qq$ generates $dg \qq \leftmod$
and $\qq W$ generates $dg \qq W \leftmod$, 
we know that $V_k$ is acyclic for all $k$ and 
that $\bignplus_1^W V$ is acyclic. 
The following isomorphism then tells us that 
$\bignplus_{N}^W V $ is acyclic for each $N$.
$${\bignplus}_1^W V \cong {\bignplus}_{N}^W V  
\oplus \bigoplus_{1 \leqslant k < N} V_k^W
$$
So now we have the following series of isomorphisms. 
$H_* V_\infty \cong  H_* (\colim_N {\bignplus}_{N}^W V )
\cong \colim_N H_* ({\bignplus}_{N}^W V ) =0$.
\end{proof}

Let $\ecal_a$ denote the full subcategory of $\mathcal{A}(\dscr)$
on tensor products of the generators $\gcal_a$, considered as a category
enriched over $dg \qq \leftmod$. 
This result below follows from \cite[Proposition 3.6]{greshi}.
\begin{theorem}\label{thm:algmorita}
The adjunction below is a Quillen equivalence with
strong symmetric monoidal left adjoint and
lax symmetric monoidal right adjoint.
$$(-) \otimes_{\ecal_a} \gcal_a :
\rightmod \ecal_a
\overrightarrow{\longleftarrow}
\mathcal{A}(\dscr) :
\underhom(\gcal_a, -)$$
\end{theorem}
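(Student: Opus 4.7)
The plan is to recognize this as a direct instance of the general Morita-style Quillen equivalence of \cite[Proposition 3.6]{greshi}, which refines \cite[Theorem 3.9.3]{ss03stabmodcat} to handle monoidal structures. That proposition takes as input a suitably enriched, cofibrantly generated, proper monoidal model category together with a set of compact, cofibrant, fibrant generators that is closed under the monoidal product, and outputs precisely the adjunction under discussion as a monoidal Quillen equivalence. So the task reduces to verifying that $\mathcal{A}(\dscr)$ and $\gcal_a$ satisfy these hypotheses.

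First I would collect the structural input already established in the preceding theorems and lemmas. The monoidal model structure on $\mathcal{A}(\dscr)$ is proper, cofibrantly generated, and satisfies the monoid axiom; enrichment over $dg\qq\leftmod$ is supplied by the internal hom together with $\bignplus_1^W$. The previous lemma showed that $\{i_k\qq W\mid k\geqslant 1\}\cup\{c\qq\}$ is a set of compact, cofibrant, and fibrant generators. Since $\gcal_a$ is by definition the closure of this set under $\otimes_\ocal$, each element of $\gcal_a$ is cofibrant by the pushout-product axiom, fibrant by inspection of stalks (fibrancy in $\mathcal{A}(\dscr)$ is automatic, every object being fibrant), and compact because a finite tensor of compact objects is compact in a monoidal model category where the generators detect weak equivalences. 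Closure of $\gcal_a$ under the monoidal product is tautological, and $c\qq\in\gcal_a$ supplies the monoidal unit.

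With the hypotheses verified, the Quillen pair structure is formal: the left adjoint $(-)\otimes_{\ecal_a}\gcal_a$ sends free modules on (acyclic) cofibrations of $dg\qq\leftmod$ or $dg\qq W\leftmod$ to (acyclic) cofibrations of $\mathcal{A}(\dscr)$, while the right adjoint $\underhom(\gcal_a,-)$ preserves fibrations and weak equivalences because these are detected stalkwise in $\rightmod\ecal_a$. For the Quillen equivalence, I would check the derived counit $\underhom(\gcal_a,X)\otimes_{\ecal_a}^L\gcal_a\to X$ is a weak equivalence on each $X\in\gcal_a$, where it is an isomorphism by construction of $\ecal_a$, and then propagate to all $X$ using the fact that $\gcal_a$ generates and its elements are compact.

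The monoidal statements are the whole point of invoking the monoidal refinement of Schwede--Shipley. Strong symmetric monoidality of $(-)\otimes_{\ecal_a}\gcal_a$ is extracted from the closure of $\gcal_a$ under $\otimes_\ocal$: on representable modules it is the identity, and both sides commute with colimits, so it extends. Lax symmetric monoidality of $\underhom(\gcal_a,-)$ then follows by adjunction. The main obstacle I anticipate is bookkeeping: one must ensure that the convolution monoidal structure on $\rightmod\ecal_a$ and the internal tensor on $\mathcal{A}(\dscr)$ match coherently on the nose when pulled back along $\ecal_a\hookrightarrow\mathcal{A}(\dscr)$, which forces $\ecal_a$ to be a monoidal $dg\qq$-ringoid rather than merely a $dg\qq$-ringoid. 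This is exactly why $\gcal_a$ was defined as the tensor-closure of the generating set rather than the generating set itself.
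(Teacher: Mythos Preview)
Your proposal is correct and matches the paper's approach: the paper simply cites \cite[Proposition 3.6]{greshi}, and you have identified the same reference and spelled out in detail why its hypotheses are met by the preceding results on $\mathcal{A}(\dscr)$ and $\gcal_a$. Your elaboration on the hypothesis-checking (properness, cofibrant generation, monoid axiom, compactness and cofibrancy of the tensor-closed generators, monoidality of the adjunction) is exactly the implicit content the paper leaves to the reader.
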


\section{Comparison of Ringoids}

\begin{proposition}\label{prop:homobjcalc}
For $H$ a dihedral subgroup of $O(2)$ of order $2k$, 
and $i \geqslant 1$, 
let $\sigma_H^i= (S_\dscr \smashprod \cofrep e_H O(2)/H_+ )^i$, 
with the smash product taken in the category of $S_\dscr$-modules. 
Then
$\underline{\pi}_*(S_\dscr)  =  c \qq$ and 
$\underline{\pi}_*(\sigma_H^i)  =  i_k \qq^{\otimes_i} W$.
\end{proposition}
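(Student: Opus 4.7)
The plan is to evaluate the functor $\underline{\pi}_*$ directly on each of the two spectra, computing the stalks at every finite $k$ and at $\infty$ separately, then reading off the structure map. The essential tools will be tom Dieck's ring isomorphism $A(O(2)) \otimes \qq \cong C(\fcal O(2)/O(2), \qq)$; the rational Mackey-functor splitting $\pi_*^H(X) \otimes \qq = \bigoplus_{(K) \leqslant H} (e_K \pi_*^K(X))^{W_H K}$ of \cite[Example C(i)]{gre98a}; and the fact that an $A(O(2))$-idempotent acts on $\pi_*^{H'}$ by multiplication by its restriction in $A(H') \otimes \qq$. The rational equivariant stable stems of $S$, and hence of $S_\dscr$, are concentrated in degree zero, so no higher-degree bookkeeping is needed.

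For $\underline{\pi}_*(S_\dscr)$ the level-$k$ stalk is $e_H \pi_*^H(S_\dscr) \otimes \qq \cong e_H (A(H) \otimes \qq) \cong \qq$ in degree zero, since $H$ is the only subgroup of $H$ in its own $O(2)$-conjugacy class. At level $\infty$, $f_n \pi_*^{O(2)}(S_\dscr) \otimes \qq$ identifies with the continuous $\qq$-valued functions on the cofinite subset $\{O(2)\} \cup \{D_{2j} : j > n\}$ of $\fcal O(2)/O(2)$; the transition maps successively delete the coordinate at $D_{2(n+1)}$, and the colimit retains only the germ at the limit point $O(2)$, giving $\qq$. The defining structure map evaluates this germ at each $D_{2k}$ and is therefore the diagonal $\qq \to \prod_k \qq$ defining $c \qq$.

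For $\underline{\pi}_*(\sigma_H^i)$ the key computation is the restriction $\iota_{H'}^*(e_H) \in A(H') \otimes \qq$, which under tom Dieck is the indicator of those subgroup-conjugacy classes $(K)_{H'}$ with $K$ $O(2)$-conjugate to $H$. Multiplication by the top-isotropy idempotent $e_{H'}$ annihilates this whenever $H'$ is not itself $O(2)$-conjugate to $H$, so every stalk with $k' \neq k$ vanishes; the $\infty$-stalk vanishes as well, because $\pi_*^{O(2)}(\sigma_H^i) \otimes \qq$ is supported at the single conjugacy class $(H) = (D_{2k})$ and is annihilated by $f_n$ for $n \geqslant k$. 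For $H' = H$, $\iota_H^*(e_H) = e_H$, and combining tom Dieck with $(O(2)/H)^H = W_{O(2)}(H) = W$ (left-translation action) together with the triviality of $W_H H$ gives $e_H \pi_0^H((O(2)/H_+)^{\smashprod i}) \otimes \qq \cong \qq[W^i]$ with diagonal $W$-action, which is $(\qq W)^{\otimes i}$. This identifies $\underline{\pi}_*(\sigma_H^i)$ with $i_k \qq^{\otimes_i} W$.

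The main obstacle is the idempotent bookkeeping behind the vanishing claims: for dihedral $H'$ of order greater than $2k$ and divisible by $k$, $\iota_{H'}^*(e_H)$ is genuinely nonzero because $H'$ does contain dihedral subgroups of order $2k$ which are $O(2)$-conjugate to $H$. One must verify that $e_{H'}$, being the primitive $A(H') \otimes \qq$-idempotent at the top conjugacy class $(H')_{H'}$, is orthogonal to the idempotents at those proper-subgroup conjugacy classes, so the product really is zero. The remaining tasks---checking the $W$-action, the degree-zero concentration, and the compatibility of structure maps (for $\sigma_H^i$ every stalk besides the $k$-th is zero, so the map into the product is trivially zero)---are routine.
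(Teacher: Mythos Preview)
Your argument is correct. The paper's proof reaches the same conclusions by a slightly different route: instead of Burnside-ring idempotent calculus, it invokes geometric fixed points, using the equivalences $\Phi^H X \simeq (e_H X)^H$, the monoidality $\Phi^H(X \smashprod Y) \simeq \Phi^H X \smashprod \Phi^H Y$, and $\Phi^H \Sigma^\infty A_+ \simeq \Sigma^\infty A^H_+$ together with $(O(2)/H)^H = W$. This packages the $i$-fold smash product cleanly via monoidality, whereas you handle it by treating $(O(2)/H_+)^{\smashprod i}$ as a single suspension spectrum and reading off the $e_H$-summand of its tom Dieck splitting. Your version is more elementary and also more explicit---you actually verify the $\infty$-stalk and the structure map, which the paper leaves to the reader.

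One simplification: your ``main obstacle'' dissolves once you notice that $e_{H'} e_H = 0$ already in $A(O(2)) \otimes \qq$ (they are characteristic functions of distinct points of $\mathcal{F}O(2)/O(2)$), so the vanishing at every stalk $k' \neq k$ follows without ever restricting to $A(H')$ or worrying about which dihedral subgroups of $H'$ are $O(2)$-conjugate to $H$.
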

\begin{proof}
We already know the homotopy groups of $S_\dscr$:
$\pi_*^K(S_\dscr) \cong \iota_K^*(e_\dscr) \pi_*^K(S) \otimes \qq$.
For $\pi_*^K(\sigma_H^i)$ we use the following facts:
$\Phi^H X \simeq (e_H X)^H$, $\Phi^H (X \smashprod Y) \simeq 
\Phi^H (X) \smashprod \Phi^H (Y)$, $\Phi^H \Sigma^\infty A \simeq \Sigma^\infty A^H$
and $(O(2)/H)^H = W$, where
$X$ and $Y$ are $O(2)$-spectra and $Y$ is an $O(2)$-space. 
Thus  
$\pi_*^K(\sigma_H^i) \cong \iota_*(e_H) \pi_*^K( O(2)/H_+^{\smashprod_i} ) \otimes \qq$.
\end{proof}

Recall that $\h_*\ecal_t \cong \pi_* \ecal_{top}$
as monoidal enriched categories, so we work with 
$\pi_* \ecal_{top}$. This is a category enriched 
over graded rational vector spaces and 
is isomorphic to the full $g \qq$-subcategory of $\ho S_\dscr \leftmod$ 
on object set $\gcal_{top}$. Furthermore this is an isomorphism 
of monoidal ringoids. Thus we calculate $[\sigma, \sigma']^{S_\dscr}_*$
for each pair of objects in $\gcal_{top}$ and we have an isomorphism from this 
graded $\qq$-vector space to $\h_*\ecal_t (\sigma, \sigma')$. 
The following proposition is immediate from the calculation above and 
Theorem \ref{thm:sesmackey}.

\begin{proposition}
Let $i,j,k,m \geqslant 1$ and 
let $H$ and $K$ be finite dihedral groups with $|H|=2k$ and $|K|=2m$.
Then the functor $\underline{\pi}_*$
from Definition \ref{def:dihedralmackey} gives isomorphisms as below.
$$
\begin{array}{rcl}
{[S_\dscr, S_\dscr]^{S_\dscr}_* }& \overset{\cong}{\longrightarrow} & 
{[c \qq , c \qq]^{\mathcal{A}(\dscr)}_* } \\
{[\sigma_H^i , S_\dscr]^{S_\dscr}_* }& \overset{\cong}{\longrightarrow} & 
{[i_k \qq W^{\otimes_i} , c \qq]^{\mathcal{A}(\dscr)}_* } \\
{[S_\dscr, \sigma_H^i ]^{S_\dscr}_* }& \overset{\cong}{\longrightarrow} & 
{[c \qq, i_k \qq W^{\otimes_i}  ]^{\mathcal{A}(\dscr)}_* } \\
{[\sigma_H^j , \sigma_H^i ]^{S_\dscr}_* }& \overset{\cong}{\longrightarrow}& 
{[i_k \qq W^{\otimes_j} , i_k \qq W^{\otimes_i}  ]^{\mathcal{A}(\dscr)}_* } \\
{[\sigma_K^j , \sigma_H^i ]^{S_\dscr}_* }& \overset{\cong}{\longrightarrow} & 
{[i_m \qq W^{\otimes_j} , i_k \qq W^{\otimes_i}  ]^{\mathcal{A}(\dscr)}_* } \\
\end{array}
$$ 
\end{proposition}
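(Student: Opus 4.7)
The plan is to attack each of the five displayed isomorphisms uniformly by combining the short exact sequence of Theorem \ref{thm:sesmackey} with the calculation of Proposition \ref{prop:homobjcalc}. Concretely, for each pair $(X,Y)$ appearing on the left (where $X,Y\in\{S_\dscr,\sigma_H^i,\sigma_H^j,\sigma_K^j,\ldots\}$), Theorem \ref{thm:sesmackey} gives a short exact sequence
$$0\to \ext_{\mathcal{A}(\dscr)}(\underline{\pi}_*(\Sigma X),\underline{\pi}_*(Y))\to e_\dscr[X,Y]^{O(2)}\to \hom_{\mathcal{A}(\dscr)}(\underline{\pi}_*(X),\underline{\pi}_*(Y))\to 0.$$
Note that since every object in question is an $S_\dscr$-module, its rational $O(2)$-homotopy groups are already killed by $e_\cscr$, so $e_\dscr[X,Y]^{O(2)}$ agrees with $[X,Y]_*^{S_\dscr}$. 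Then Proposition \ref{prop:homobjcalc} rewrites each source as either $c\qq$ or $i_k\qq W^{\otimes_i}$ (with $k=|H|/2$ or $m=|K|/2$), so the rightmost term is exactly the Hom-group displayed in the proposition.

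The remaining task is to show that the Ext term on the left vanishes in each of the five cases; granting this, the short exact sequence collapses to the desired isomorphism. The observation is that $\underline{\pi}_*(\Sigma X)$ is, up to a shift, one of $c\qq$ or $i_k\qq W^{\otimes_j}$, and each of these is a projective object in $\mathcal{A}(\dscr)$ when equipped with trivial differential. For $i_k\qq W^{\otimes_j}$ this is essentially the statement that $\qq W$ is a free $\qq W$-module, together with the fact that skyscraper-type objects $i_k(-)$ are projective since $p_k$ is exact (indeed $i_k$ is left adjoint to the exact functor $p_k$). For $c\qq$ one uses that $c$ is left adjoint to ${\bignplus}_1^W$ and that $\qq$ is projective over itself, so $c\qq$ represents the functor ${\bignplus}_1^W(-)^0$, which is exact because ${\bignplus}_1^W$ preserves surjections of stalks at each finite $k$ and of $W$-fixed-point systems over $\infty$. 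Tensor powers of these generators remain projective because the model structure is monoidal and the tensor product is exact on the relevant pieces (as $\qq W$ is semisimple in characteristic zero).

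The main obstacle, and the step I would be most careful about, is verifying the projectivity claim for $c\qq$: although $c\qq$ is cofibrant and fibrant (as noted in the lemma preceding this proposition), one needs exactness of the global-sections-type functor ${\bignplus}_1^W$ on the short exact sequences that can actually occur with target $\underline{\pi}_*(Y)$. Because the targets $\underline{\pi}_*(Y)$ have the particularly simple form $c\qq$ or $i_k\qq W^{\otimes_i}$ with zero differential, any short exact sequence in $\mathcal{A}(\dscr)$ with this target is, stalk by stalk, a short exact sequence of modules over a semisimple ring ($\qq$ or $\qq W$) and hence splits; this split survives ${\bignplus}_1^W$, yielding the vanishing. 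With Ext dispatched in each of the five cases, the map $\underline{\pi}_*$ is an isomorphism as claimed, and the rest is the bookkeeping of identifying source and target via Proposition \ref{prop:homobjcalc}.
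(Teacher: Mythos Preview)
Your approach is correct and is exactly what the paper does: it declares the proposition ``immediate from the calculation above and Theorem \ref{thm:sesmackey}'', and you have spelled out precisely why---the short exact sequence collapses because the sources $c\qq$ and $i_k\qq W^{\otimes j}$ are projective in $\mathcal{A}(\dscr)$, and Proposition \ref{prop:homobjcalc} identifies the objects. Your projectivity argument via the adjunctions $(i_k,p_k)$ and $(c,{\bignplus}_1^W)$ with exact right adjoints is the clean way to do it; the fallback stalkwise-splitting paragraph is unnecessary (and a bit shaky, since stalkwise splittings need not glue), but you do not need it.
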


\begin{proposition}
Let $i,j,k \geqslant 1$, then 
$$
\begin{array}{rcl}
{[c \qq , c \qq]^{\mathcal{A}(\dscr)}_* }& = & e_\dscr A(O(2)) \otimes \qq \\
{[i_k \qq W^{\otimes_i} , c \qq]^{\mathcal{A}(\dscr)}_* }
& = & \hom_\qq (\qq W^{\otimes_i}, \qq)^W \\
{[c \qq, i_k \qq W^{\otimes_i}  ]^{\mathcal{A}(\dscr)}_* }
& = & \hom_\qq (\qq , \qq W^{\otimes_i})^W \\
{[i_k \qq W^{\otimes_j} , i_k \qq W^{\otimes_i}  ]^{\mathcal{A}(\dscr)}_* }
& = & \hom_\qq (\qq W^{\otimes_i}, \qq W^{\otimes_j})^W \\
{[i_n \qq W^{\otimes_j}  , i_k \qq W^{\otimes_i}  ]^{\mathcal{A}(\dscr)}_* }& = & 0. \\
\end{array}
$$ 
\end{proposition}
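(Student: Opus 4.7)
The plan is to compute each hom-group directly from the definition of morphisms in $\mathcal{A}(\dscr)$, using that the generators in question are cofibrant and fibrant (so that $[-,-]^{\mathcal{A}(\dscr)}_*$ equals chain-homotopy classes of chain maps) and that every object listed has zero differential and is concentrated in degree zero. Consequently the graded hom groups will collapse to their degree-zero parts, which I will compute by unwinding the data of a map.

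First I would record the stalks and structure maps: $(c\qq)_\infty = \qq$ and $(c\qq)_k = \qq$ with structure map the diagonal $\qq \to \colim_n \prod_{k \geqslant n} \qq$, while $(i_k \qq W^{\otimes_i})_k = \qq W^{\otimes_i}$ with all other stalks zero. A key reduction is the vanishing $(i_k \qq W^{\otimes_i})(\text{end}) = \colim_n \prod_{j \geqslant n} (i_k \qq W^{\otimes_i})_j = 0$, since the product is zero once $n > k$. This forces the structure-map compatibility square to commute automatically whenever the source or target has the form $i_k(-)$, reducing such hom-computations to stalkwise hom.

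With this reduction the last four equalities are essentially formal. For $[i_k \qq W^{\otimes_j}, i_k \qq W^{\otimes_i}]$ only the position-$k$ component survives, giving $dg\,\qq W$-module maps $\qq W^{\otimes_j} \to \qq W^{\otimes_i}$, which are the $W$-fixed points of $\hom_\qq(\qq W^{\otimes_j}, \qq W^{\otimes_i})$; for $[i_n \qq W^{\otimes_j}, i_k \qq W^{\otimes_i}]$ with $n \neq k$ the target has zero stalk at $n$, so the hom vanishes; and the mixed cases $[i_k \qq W^{\otimes_i}, c\qq]$ and $[c\qq, i_k \qq W^{\otimes_i}]$ follow by the same stalkwise extraction at position $k$ (in the second case, the $\infty$-component of a map is forced to be zero since the target's $\infty$-stalk is zero, and the compatibility square collapses because $(i_k \qq W^{\otimes_i})(\text{end}) = 0$).

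The substantive computation is $[c\qq, c\qq]$. A chain map $c\qq \to c\qq$ is a scalar $\lambda_\infty \in \qq$ together with a sequence $(\lambda_k)_{k \geqslant 1} \in \prod_k \qq$, and the compatibility square with the diagonal structure maps identifies the image of $\lambda_\infty$ under the diagonal $\qq \to \colim_n \prod_{k \geqslant n} \qq$ with the image of $(\lambda_k)_k$. Unwinding the colimit, this forces the sequence $(\lambda_k)$ to agree with the constant sequence $(\lambda_\infty)$ for all sufficiently large $k$. Such data is exactly a continuous $\qq$-valued function on $\pscr = \fcal O(2)/O(2) \setminus \{SO(2)\}$, so by tom Dieck's ring isomorphism together with the idempotent description of $e_\dscr$ as the characteristic function of $\pscr$, this hom-group is $e_\dscr A(O(2)) \otimes \qq$.

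The main obstacle will be the bookkeeping in the $[c\qq, c\qq]$ case, specifically identifying the algebraic description of eventually-constant sequences with tom Dieck's ring $C(\fcal O(2)/O(2), \qq)$ cut down by the idempotent $e_\dscr$; the remaining cases are straightforward unpacking once one observes that the $(\text{end})$-object of each $i_k(-)$ vanishes.
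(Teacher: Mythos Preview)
Your proposal is correct and is precisely the direct computation the paper has in mind; in fact the paper states this proposition without proof, treating it as routine. Your key observations---that all objects involved are cofibrant--fibrant and concentrated in degree zero (so derived hom equals honest hom), and that $(i_k R)(\text{end})=0$ collapses the compatibility square---are exactly what is needed.

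One minor streamlining you could make: rather than unwinding the definition of a map by hand in each case, you can invoke the adjunctions of Lemma~\ref{lem:manyadjoints} directly. For instance $(i_k,p_k)$ gives $[i_k \qq W^{\otimes j}, X]^{\mathcal{A}(\dscr)} \cong [\qq W^{\otimes j}, p_k X]^{\qq W}$, which immediately yields the second, fourth, and fifth lines; $(p_k,i_k)$ handles the third; and $(c,\bignplus_1^W)$ reduces the first line to computing $\bignplus_1^W c\qq$, which is the pullback you describe and equals the ring of eventually-constant sequences, i.e.\ $C(\pscr,\qq) \cong e_\dscr A(O(2))\otimes\qq$. This is of course the same argument as yours, just packaged via the adjunctions rather than unwound stalkwise.
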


\begin{theorem}\label{thm:intrinsicform}
There is an isomorphism of monoidal $dg \qq$-categories
$\h_*\ecal_t \cong \ecal_a$.
\end{theorem}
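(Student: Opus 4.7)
The plan is to combine the identification $\h_*\ecal_t \cong \pi_*\ecal_{top}$ from Theorem \ref{thm:EtopisEt} with a direct comparison $\pi_*\ecal_{top} \cong \ecal_a$ mediated by the functor $\underline{\pi}_*$ of Definition \ref{def:dihedralmackey}. Since the generators of $\gcal_a$ all have trivial differential, the hom-complexes of $\ecal_a$ themselves carry the zero differential, so there is no real distinction between comparing as graded $\qq$-ringoids and as $dg\qq$-ringoids; it suffices to construct the graded isomorphism.

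For the object bijection $\gcal_{top} \to \gcal_a$, Proposition \ref{prop:homobjcalc} supplies the basic correspondence $S_\dscr \mapsto c\qq$ and $\sigma_H^i \mapsto i_k\qq W^{\otimes i}$ (with $|H|=2k$); I extend to the closures by declaring smash products to map to tensor products. The only subtlety is cross products $\sigma_H^i \smashprod \sigma_K^j$ with $H \neq K$, which are rationally null-homotopic because $e_H$ and $e_K$ are orthogonal idempotents each acting as identity on the corresponding factor, matching the fact that $i_k \otimes i_m = 0$ in $\mathcal{A}(\dscr)$ whenever $k \neq m$. For morphisms, the proposition immediately preceding the theorem provides $[\sigma,\sigma']_*^{S_\dscr} \cong [\underline{\pi}_*\sigma, \underline{\pi}_*\sigma']_*^{\mathcal{A}(\dscr)}$ for every pair of generators. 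Since these isomorphisms are induced by the functor $\underline{\pi}_* \co \ho S_\dscr\leftmod \to \mathcal{A}(\dscr)$, preservation of composition is automatic, and the data assembles into a functor of graded $\qq$-ringoids.

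The main remaining task is monoidal compatibility. Because $\gcal_{top}$ and $\gcal_a$ are each closed under their monoidal product, the structural natural transformations $F(X)\otimes F(Y) \to F(X \smashprod Y)$ may be taken to be identities on objects, so strong monoidality reduces to a case analysis: $S_\dscr$ as the unit matches $c\qq$; the composition $\sigma_H^i \smashprod \sigma_H^j \simeq \sigma_H^{i+j}$ matches $i_k\qq W^{\otimes i} \otimes i_k\qq W^{\otimes j} = i_k\qq W^{\otimes(i+j)}$, with both sides carrying the diagonal $W$-action (on the topological side via $(O(2)/H)^H = W$); and cross terms vanish on both sides as noted.

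The delicate step --- which I expect to be the main obstacle --- is verifying that the hom-set isomorphism respects the tensor pairing of morphisms, not merely that of objects. I would approach this by tracing the naturality of the short exact sequence of Theorem \ref{thm:sesmackey} through the smash product of representatives, checking the pairing first on the basic classes identified in Proposition \ref{prop:homobjcalc} and extending by bilinearity. Once this is done, higher coherence (associator, unit, symmetry) follows formally from the coherence of the symmetric monoidal structures on $\ho S_\dscr\leftmod$ and on $\mathcal{A}(\dscr)$, together with the fact that $\underline{\pi}_*$ is a symmetric monoidal functor on the relevant full subcategory.
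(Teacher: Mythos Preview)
Your approach matches the paper's: reduce to graded $\qq$-categories (trivial differentials on both sides), use $\underline{\pi}_*$ as the comparison functor, and invoke the preceding hom-set calculations to see it is an isomorphism on morphism spaces. The paper, however, dispatches your ``delicate step'' in a single clause --- ``this functor respects the monoidal structures since everything is concentrated in degree zero'' --- rather than by tracing the short exact sequence of Theorem~\ref{thm:sesmackey}; the implicit content is that $\underline{\pi}_*$ is lax symmetric monoidal (via external products in homotopy groups), Proposition~\ref{prop:homobjcalc} shows the lax comparison maps are isomorphisms on the objects of $\gcal_{top}$, and naturality of the lax structure then gives compatibility with the tensor pairing of morphisms for free, so no separate case analysis is needed.
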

\begin{proof}
As mentioned above it suffices to give an isomorphism of monoidal
$g \qq$-categories (and hence of $dg \qq$-categories with trivial differentials)
$\pi_* \ecal_{top} \to \ecal_a$. 
We already have a suitable enriched functor: $\underline{\pi}_*$,
our calculations above show that this gives an isomorphism
of enriched categories. This functor respects the monoidal
structures since everything is concentrated in degree zero.
\end{proof}

\begin{theorem}\label{thm:ETtoEA}
Let $C_0$ denote the $(-1)$-connective cover functor on 
$dg \qq$-modules. 
There is a zig-zag of quasi-isomorphisms of monoidal
$dg \qq$-categories.
$$\ecal_t \overset{\sim}{\longleftarrow} C_0 \ecal_t
\overset{\sim}{\longrightarrow} \h_* \ecal_t \cong \ecal_a $$
hence there is a zig-zag of monoidal Quillen equivalences
of $dg \qq \leftmod$-model categories.
$$\rightmod \ecal_t
\overleftarrow{\longrightarrow} \rightmod C_0 \ecal_t
\overrightarrow{\longleftarrow} \rightmod \h_* \ecal_t
\cong \rightmod \ecal_a $$
\end{theorem}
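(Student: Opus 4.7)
The plan is to leverage the fact, noted in the introduction, that the homotopy of $\ecal_{top}$, equivalently the homology of each morphism complex of $\ecal_t$, is concentrated in degree zero. Under this hypothesis both the inclusion $i \co C_0 \ecal_t \to \ecal_t$ of the connective cover and the quotient $p \co C_0 \ecal_t \to \h_* \ecal_t$ by boundaries will be quasi-isomorphisms on each morphism complex, so the real work is verifying that these are morphisms of monoidal $dg \qq$-categories and then passing to modules by base change.

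First I would set up $C_0$ as an endofunctor of $dg \qq \leftmod$ and record that it is lax symmetric monoidal: for any chain complexes $A$ and $B$, the canonical map $C_0 A \otimes C_0 B \to A \otimes B$ factors uniquely through $C_0(A \otimes B)$ because its source is non-negatively graded. This lets me define $C_0 \ecal_t$ to have the same objects as $\ecal_t$, morphism complexes $C_0(\ecal_t(x,y))$, and composition and external tensor product inherited from $\ecal_t$ via the lax structure maps. By construction $i$ is a map of monoidal $dg \qq$-categories, and since $H_n \ecal_t(x,y) = 0$ for $n \neq 0$ its components are quasi-isomorphisms.

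Next I would construct $p$. The quotient-by-boundaries chain map $C_0 A \to H_0 A$ is a quasi-isomorphism whenever $A$ has homology concentrated in degree zero, which is our situation. This is where rationality enters: since $\qq$ is a field, the Künneth map $H_0(A) \otimes H_0(B) \to H_0(A \otimes B)$ is an isomorphism, so $\h_* \ecal_t$, viewed as a $dg \qq$-category with trivial differential, acquires a strict monoidal structure for which $p$ is monoidal. Composing with the isomorphism of Theorem \ref{thm:intrinsicform} produces the advertised zig-zag $\ecal_t \overset{\sim}{\longleftarrow} C_0 \ecal_t \overset{\sim}{\longrightarrow} \ecal_a$ of monoidal $dg \qq$-categories.

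For the module-level statement I would appeal to the base-change formalism of \cite[Section 6]{ss03stabmodcat}: restriction and extension of scalars along a quasi-isomorphism of $dg \qq$-ringoids form a monoidal Quillen equivalence on right-module categories. The main obstacle, I expect, is not the quasi-isomorphism claims themselves, which follow immediately from the connectivity hypothesis, but rather the monoidal bookkeeping: checking that the lax structure on $C_0$ is coherent with the associativity and symmetry constraints of $\ecal_t$, that $C_0 \ecal_t$ really defines a monoidal $dg \qq$-category, and that the induced extension-of-scalars functor on right modules is strong (rather than merely lax) monoidal. Each reduces to the universal property of the connective cover combined with the fact that tensor products of non-negatively graded complexes remain non-negatively graded.
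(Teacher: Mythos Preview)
Your proposal is correct and follows essentially the same route as the paper: the inclusion of the connective cover and the projection to homology are quasi-isomorphisms because $\h_* \ecal_t$ is concentrated in degree zero, and the passage to module categories is by restriction/extension of scalars along these maps via \cite{ss03stabmodcat}. The paper's own proof is terser, deferring the monoidal bookkeeping you spell out (the lax monoidal structure on $C_0$, the K\"unneth argument for $p$) to \cite[Theorem 4.3.9 and Corollary 4.3.10]{barnes} and citing \cite[Theorem A.1.1]{ss03stabmodcat} rather than Section 6 for the module-level Quillen equivalence.
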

\begin{proof}
This result is \cite[Theorem 4.3.9 and Corollary 4.3.10]{barnes}. 
The map $C_0 \ecal_t \to \ecal_t$ is the inclusion and 
$C_0 \ecal_t \to \h_* \ecal_t$ is the projection. 
These are quasi-isomorphisms because the homology of $\ecal_t$
is concentrated in degree zero. That quasi-isomorphisms induce 
Quillen equivalences of module categories is \cite[Theorem A.1.1]{ss03stabmodcat}.
\end{proof}

\addcontentsline{toc}{part}{Bibliography}
\bibliography{dihedralbib}
\bibliographystyle{alpha}

\end{document}